\documentclass{birkjour}
\usepackage{amsmath}
\usepackage{amssymb}
\usepackage{color}

\newtheorem{thm}{Theorem}[section]

\newtheorem{lem}[thm]{Lemma}
\newtheorem{prop}[thm]{Proposition}
\theoremstyle{definition}
\newtheorem{defn}[thm]{Definition}
\theoremstyle{remark}

\newtheorem*{ex}{Example}
\numberwithin{equation}{section}

\newcommand{\A}{\mathcal{A}}
\newcommand{\B}{\mathcal{B}}
\newcommand{\T}{\mathcal{T}}

\renewcommand{\o}{o}
\newcommand{\E}{\mathbb{E}}
\renewcommand{\P}{\mathbb{P}}

\newcommand{\TBR}{\mathcal{TBR}}
\newcommand{\SPR}{\mathcal{SPR}}
\newcommand{\rSPR}{r\mathcal{SPR}}

\newcommand{\vertex}{
\color[rgb]{1,0.0,0.0} \put(0,0){\circle*{8}}
\color[rgb]{1,0.1,0.1} \put(-0.25,0.25){\circle*{7}}
\color[rgb]{1,0.2,0.2} \put(-0.5,0.5){\circle*{6}}
\color[rgb]{1,0.4,0.4} \put(-0.75,0.75){\circle*{5}}
\color[rgb]{1,0.6,0.6} \put(-1.0,1.0){\circle*{4}}
\color[rgb]{1,0.8,0.8} \put(-1.25,1.25){\circle*{3}}
\color[rgb]{1,0.9,0.9} \put(-1.5,1.5){\circle*{2}}
\color[rgb]{1,1,1} \put(-1.75,1.75){\circle*{1}}
\color{black} \put(0,0){\circle{8}}
}

\title[Subtree Transfer Operations in Binary Trees]
{Extremal Distances for Subtree Transfer Operations \\ in Binary Trees}

\author[Atkins]{Ross Atkins}
\address{
University of Oxford \\
Department of Statistics \\
1 South Parks Road \\
Oxford OX1 3TG \\
United Kingdom}
\email{ross.atkins@univ.ox.ac.uk}

\author[McDiarmid]{Colin McDiarmid}
\address{
University of Oxford \\
Department of Statistics \\
1 South Parks Road \\
Oxford OX1 3TG \\
United Kingdom}
\email{cmcd@stats.ox.ac.uk}

\usepackage{color}

\begin{document}

\begin{abstract}
Three standard subtree transfer operations for binary trees, used in particular for phylogenetic trees, are: tree bisection and reconnection ($TBR$), subtree prune and regraft ($SPR$) and rooted subtree prune and regraft ($rSPR$). For a pair of leaf-labelled binary trees with $n$ leaves, the maximum number of such moves required to transform one into the other is $n-\Theta(\sqrt{n})$, extending a result of Ding, Grunewald and Humphries. We show that if the pair is chosen uniformly at random, then the expected number of moves required to transfer one into the other is $n-\Theta(n^{2/3})$. These results may be phrased in terms of agreement forests: we also give extensions for more than two binary trees.
\end{abstract}

\maketitle

%\thanks{Special thanks to Charles Semple for helpful comments.}
%\subjectclass{Primary 99Z99, Secondary 00A00}
\keywords{\textbf{keywords:} Phylogenetic Tree, Subtree Prune and Regraft, Tree Bisection and Reconnection, Binary Tree, Agreement Forest, Tree Rearrangement}

\section{Introduction}

A standard way to transform one binary tree into another is by performing `subtree-prune-and-regraft' ($SPR$) moves (definitions are given below). These operations are of particular interest for phylogenetic trees. For a pair of binary trees with $n+1$ labelled leaves (including the root), it was shown by Allen and Steele \cite{allen2001} in $2001$ that the maximum number of $SPR$ moves required to change one into the other, $D_{SPR}(n)$, is between $\frac{n}{2}-\o(n)$ and $n-2$. Martin and Thatte~\cite{martin} show the existence of a common subtree of size $\Omega(\sqrt{\log n})$, which brings the upper-bound for $D_{SPR}(n)$ down to $n - \Omega (\sqrt{ \log n})$.
\\ 
Ding, Grunewald and Humphries \cite{ding} show that $D_{SPR}(n)$ is actually $n-\Theta(\sqrt{n})$. We give an extended version of this result for the rooted and unrooted cases. We also show that if one of the trees is fixed arbitrarily and the other is chosen to maximise the number of SPR moves required to turn one into the other, then still $n-\Theta(\sqrt{n})$ moves are required. We prove the last result by showing that for a fixed pair of binary trees, we can label the leaves in such a way that $n - \Theta(\sqrt{n})$ moves are required to turn one into the other. This result is set in a more general context of agreement forests for sets of $k \geq 2$ trees. We also show that if two trees chosen independently, uniformly at random, then the expected number of moves required to transform one into the other is $n-\Theta(n^{2/3})$ [Theorem \ref{thm:expectation}]. % and $D$ is highly concentrated about its mean.

Subtree transfer operations are subtly different when acting on rooted trees as opposed to unrooted trees. Some papers have considered $SPR$ and $rSPR$ moves as acting on different classes of binary tree. % \cite{??} 
We give a unified treatment here; when dealing with rooted-subtree-prune-and-regraft ($rSPR$) moves, we insist that one of the leaf-labels be $0$, and this leaf acts effectively as the root of the tree. We insist that the root has degree $1$ and it behaves in exactly the same manner as the other leaves for $SPR$ and $TBR$ moves.\footnote{Sometimes, the root of a rooted-binary-tree is defined to have degree $2$. This is equivalent to our definition (see Definition \ref{def:B}). Simply add a new leaf, adjacent to the degree-$2$ root, label it $0$ and make it the new root.}

In the subsections below we define binary trees, the three subtree transfer operations $TBR$, $SPR$ and $rSPR$, and the corresponding metrics they induce over the class of binary trees.

\subsection{Binary Trees}

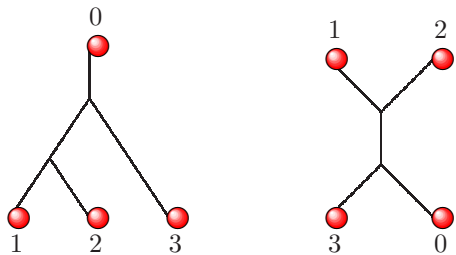
\begin{figure} 
\setlength{\unitlength}{1pt} 
\begin{center}
\begin{picture}(160,80)
	\qbezier(30,65)(30,55)(30,45)
	\qbezier(0,0)(15,22.5)(30,45)
	\qbezier(30,0)(22.5,11.25)(15,22.5)
	\qbezier(60,0)(45,22.5)(30,45)
	\put(0,0){\vertex}
	\put(30,0){\vertex}
	\put(60,0){\vertex}
	\put(30,65){\vertex}
	\put(0,-13){\mbox{$1$}}
	\put(30,-13){\mbox{$2$}}
	\put(60,-13){\mbox{$3$}}
	\put(30,73){\mbox{$0$}}
\put(120,0){
	\qbezier(20,40)(20,30)(20,20)
	\qbezier(0,0)(10,10)(20,20)
	\qbezier(40,0)(30,10)(20,20)
	\qbezier(0,60)(10,50)(20,40)
	\qbezier(40,60)(30,50)(20,40)
	\put(0,0){\vertex}
	\put(40,0){\vertex}
	\put(0,60){\vertex}
	\put(40,60){\vertex}
	\put(0,-13){\mbox{$3$}}
	\put(40,-13){\mbox{$0$}}
	\put(0,68){\mbox{$1$}}
	\put(40,68){\mbox{$2$}}
}
\end{picture}
\end{center}
\caption{Two diagrams for the same tree in $\B(\{ 0,1,2,3 \} )$.}
\label{fig:binaryTree}
\end{figure}

\begin{defn} \label{def:B}
Let $X$ be a non-empty, finite set. If $|X|>1$, then a \emph{binary leaf-labelled tree} with leaf-label set $X$ is a finite tree which has $2$ types of vertex: 
	\begin{itemize}
	\item \emph{tree} vertices of degree $3$, and
	\item \emph{leaf} vertices of degree $1$, labelled with the set $X$ (there is a bijection between the leaves and $X$). 
	\end{itemize}
If $0 \in X$, the leaf labelled $0$ is called the \emph{root} of the tree. In the trivial case, when $X = \{ \alpha \}$ is a singleton, then the trivial graph containing only one vertex, labelled $\alpha$, is considered to be a binary leaf-labelled tree, and its vertex is called a leaf.
\end{defn}

Let $\B(X)$ be the set of all binary leaf-labelled trees, with label set $X$. For a leaf-labelled tree $A$, let $L(A)$ denote the label set of $A$; so if $A \in \B(X)$ then $L(A) = X$. For trees $T$ and $T^\prime$ in $\B(X)$, we say $T$ is isomorphic to $T^\prime$ (denoted $T \equiv T^\prime$) if there is a graph isomorphism preserving leaf labels; if there is a bijection $\psi : V(T) \rightarrow V(T^\prime)$ such that $uv$ is an edge in $T$ if and only if $\psi(x)\psi(y)$ is an edge in $T^\prime$ and for each leaf $v$ of $T$, $v$ and $\psi(v)$ are labelled with the same element of $X$.
For non-empty $S \subseteq L(A)$, let $A|S$ be the minimal subtree of $A$ that contains all the leaves with labels in $S$. Let $A/S$ be the tree formed by suppressing any vertices of degree $2$ from $A|S$. So 
	$$ A|S \mbox{ is a subgraph of } A \qquad \mbox{ and } \qquad A/S \in \B(S). $$
In this paper, $X$ will usually be $\{ 0,1,2, \ldots ,n \}$, i.e. $|X| = n+1$. So $|\B(X)|$ is $1,1,3,15, \ldots$ for $n = 1,2,3,4, \ldots$ and in general for $n \geq 2$, the number is given by 
	$$ |\B(X)| = 1 \times 3 \times 5 \times \cdots \times \left( 2n-3 \right). $$

\subsection{Tree Bisection and Reconnection}

\begin{figure}
\setlength{\unitlength}{1pt} 
\begin{center}
\begin{picture}(200,120)(-10,0)
\put(-10,0){
	\qbezier(30,30)(30,20)(30,10)
	\qbezier(30,30)(45,30)(60,30)
	\qbezier(30,80)(30,90)(30,100)
	\qbezier(30,80)(55,80)(80,80)
	\qbezier(60,80)(60,90)(60,100)
	\put(-10,17){$0$}
	\put(28,-7){$1$}
	\put(63,17){$2$}
	\put(-11,82){$3$}
	\put(22,106){$4$}
	\put(60,107){$5$}
	\put(83,85){$6$}
	\color[rgb]{0.0,0.5,0.0} \linethickness{0.5mm} 
	\qbezier(0,80)(7.5,72.5)(15,65)
	\qbezier(30,80)(22.5,72.5)(15,65)
	\qbezier(0,30)(7.5,37.5)(15,45)
	\qbezier(30,30)(22.5,37.5)(15,45)
	\put(0,30){\vertex}
	\put(30,10){\vertex}
	\put(60,30){\vertex}
	\put(0,80){\vertex}
	\put(30,100){\vertex}
	\put(60,100){\vertex}
	\put(80,80){\vertex}
	\color{blue} \linethickness{1mm}
	\qbezier(15,45)(15,55)(15,65)
}
\put(140,0){
	\qbezier(60,30)(52.5,37.5)(45,45)
	\qbezier(30,30)(37.5,37.5)(45,45)
	\qbezier(30,30)(30,20)(30,10)
	\qbezier(60,80)(52.5,72.5)(45,65)
	\qbezier(30,80)(37.5,72.5)(45,65)
	\qbezier(30,80)(30,90)(30,100)
	\qbezier(60,80)(70,80)(80,80)
	\qbezier(60,80)(60,90)(60,100)
	\put(-10,17){$0$}
	\put(28,-7){$1$}
	\put(63,17){$2$}
	\put(-11,82){$3$}
	\put(22,106){$4$}
	\put(60,107){$5$}
	\put(83,85){$6$}
	\color[rgb]{0.0,0.5,0.0} \linethickness{0.5mm}
	\qbezier(30,80)(15,80)(0,80)
	\qbezier(30,30)(15,30)(0,30)
	\put(0,30){\vertex}
	\put(30,10){\vertex}
	\put(60,30){\vertex}
	\put(0,80){\vertex}
	\put(30,100){\vertex}
	\put(60,100){\vertex}
	\put(80,80){\vertex}
	\color{blue} \linethickness{1mm}
	\qbezier(45,45)(45,55)(45,65)
}
\put(95,50){\mbox{$\Longrightarrow$}}
\end{picture}
\end{center}
\caption{The TBR operation.}
\label{fig:tbrDefinition}
\end{figure}
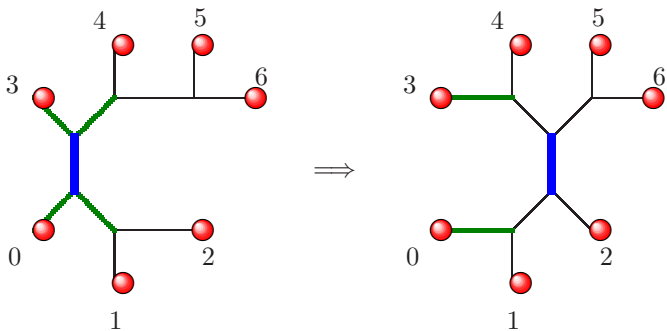

\begin{defn} \label{def:tbr}
A tree-bisection-and-reconnection (TBR) on a tree $T \in \B(X)$, for $|X| \geq 3$, is a two step process:
\begin{enumerate}
\item \emph{(bisection step)} Delete an edge $e$ from $T$ (this edge is called the \emph{bisection} edge) and then suppress all degree $2$ vertices (the edges created by these suppressions are called \emph{new} edges) to obtain a pair $T_1,T_2$ of binary trees with $L(T_1) \sqcup L(T_2) = X$. (The number of new edges is $2$, unless the bisection edge was incident with a leaf. In the later case, there is only one new edge and either $T_1$ or $T_2$ is an isolated vertex.) 
\item \emph{(reconnection step)} Connect $T_1$ and $T_2$ by creating an edge between the midpoint of an edge in $T_1$ and the midpoint of an edge in $T_2$. This edge is called the \emph{reconnecting} edge. (In the case that $T_1$ or $T_2$ is an isolated vertex, instead the reconnecting edge is incident to this vertex.) 
\end{enumerate}
\end{defn}

See Figure~\ref{fig:tbrDefinition}: on the left the bisection edge is blue and the green edges will form the new edges, and on the right the blue edge is the reconnection edge and the green edges are the new edges.

If $A \in \B(X)$, and $B$ is the result of performing a TBR on $A$, then $B \in \B(X)$ because $B$ must be connected, acyclic, and all non-leaf vertices have degree $3$. 

\begin{defn}
Let $\TBR = \TBR(X)$ denote the set of pairs $(A,B) \in \B(X)^2$ such that $B$ can be obtained from $A$ by performing a single TBR move.
\end{defn}

\begin{prop}
\label{prop:sym1}
$(A,B) \in \TBR$ if and only if $(B,A) \in \TBR$.
\end{prop}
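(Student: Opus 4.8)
The plan is to exploit the fact that the bisection step and the reconnection step of a TBR are, locally, inverse to one another: suppressing a degree-$2$ vertex (which merges its two incident edges into a single \emph{new} edge) is undone by subdividing that new edge at its midpoint, and conversely. Since the statement to be proved is symmetric in $A$ and $B$, it suffices to establish just one implication: assuming $(A,B)\in\TBR$, I would exhibit a single TBR move carrying $B$ back to $A$, which shows $(B,A)\in\TBR$. The reverse implication then follows at once by interchanging the names of $A$ and $B$, since the implication is to be proved for all pairs.

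So suppose $B$ is obtained from $A$ by deleting a bisection edge $e$, suppressing the resulting degree-$2$ vertices to produce binary trees $T_1,T_2$ with distinguished new edges $f_1,f_2$, and then adding a reconnecting edge $g$ between the midpoints of some edge $a_1$ of $T_1$ and some edge $a_2$ of $T_2$. I would first undo the reconnection by taking $g$ to be the bisection edge of a new TBR move on $B$. Deleting $g$ leaves each of its endpoints with degree $2$; suppressing them merges the two halves of each $a_i$ back together, so the two components obtained are exactly $T_1$ and $T_2$, still carrying their new edges $f_1,f_2$. I would then reconnect along $f_1$ and $f_2$: subdividing $f_1$ and $f_2$ and joining the two resulting midpoints recreates precisely the two former endpoints of $e$ together with the edge $e$ between them. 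The tree so obtained is therefore $A$, and the entire operation is a single TBR move from $B$ to $A$, as required.

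The one place that needs care is the degenerate case singled out in Definition~\ref{def:tbr}, where the bisection edge is incident with a leaf, so that one of $T_1,T_2$ is an isolated vertex and only one new edge is created; here the reconnecting edge is incident to that isolated leaf rather than to the midpoint of an edge. I would check that the argument goes through verbatim: deleting $g$ in $B$ again separates the isolated leaf from the remaining tree, and reconnecting that leaf to the midpoint of the single new edge restores $e$, and hence $A$. A second minor point to verify is that the construction is unaffected when $g$ happens to subdivide a new edge (that is, when $a_i=f_i$); deleting $g$ and suppressing its endpoint simply restores $f_i$ intact, so $T_i$ is recovered exactly as before.

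I expect the main obstacle to be purely bookkeeping rather than conceptual: one must track carefully which vertices are suppressed and which edges are subdivided at each stage, and confirm that the new edges produced by the bisection of $A$ are precisely the edges one subdivides when reconnecting $B$ to recover $A$. Once this correspondence between new edges and subdivision points is pinned down, reversibility is immediate, and by the symmetry noted at the outset the equivalence $(A,B)\in\TBR \iff (B,A)\in\TBR$ follows.
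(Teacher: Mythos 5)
Your proposal is correct and follows essentially the same construction as the paper: use the reconnecting edge of the original move as the bisection edge of the inverse move, recover the same two components $T_1,T_2$, and reconnect at the midpoints of the new edges (or at the isolated vertex in the degenerate case). Your additional remarks on the case where the reconnecting edge subdivides a new edge are sensible bookkeeping but do not change the argument.
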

\begin{proof}
If $f$ is a TBR which transforms $A$~into~$B$, then we can find a TBR, $f^{-1}$ which transforms $B$~into~$A$ using the following construction. Firstly use the reconnecting edge of $f$ as the bisection edge of $f^{-1}$. The two components formed will be identical to the two components formed when the bisection step of $f$ was performed. If $f$ had two new edges, then we can let the reconnecting edge of $f^{-1}$ be an edge between the midpoints of these edges. If $f$ had one new edge then one part, say $T_2$ must be an isolated vertex $v$, and so we can let the reconnecting edge of $f^{-1}$ be an edge between $v$ and the midpoint of the edge in $T_1$ which was the new edge of $f$. 
\end{proof}

\subsection{Subtree Prune and Regraft}

The SPR move (Definition \ref{def:spr}) is an operation on binary trees whereby a subtree is removed from one part of the tree and regrafted to another part of the tree. SPR moves feature in many papers, see for example: \cite{allen2001} and \cite{bordewich}. These SPR moves are widely used by tree-searching software packages, like PAUP \cite{swofford1998} and Garli \cite{zwickl}, used in phylogenetic research. 

\begin{figure}
\begin{center}
\begin{picture}(230,60)
\put(0,0){
	\qbezier(0,0)(7.5,15)(15,30)
	\qbezier(15,30)(15,45)(15,60)
	\qbezier(45,60)(52.5,30)(60,0)
	\qbezier(30,0)(22.5,15)(15,30)
	\put(0,0){\vertex}
	\put(15,60){\vertex}
	\put(15,30){\vertex}
	\put(45,60){\vertex}
	\put(30,0){\vertex}
	\put(60,0){\vertex}
	\put(10,68){\mbox{$a$}}
	\put(-5,-14){\mbox{$b$}}
	\put(45,68){\mbox{$c$}}
	\put(60,-14){\mbox{$d$}}
	\put(20,25){\mbox{$x$}}
	\put(28,-14){\mbox{$y$}}
}
\put(108,30){\mbox{$\Longrightarrow$}}
\put(170,0){
	\qbezier(0,0)(7.5,30)(15,60)
	\qbezier(45,30)(45,45)(45,60)
	\qbezier(45,30)(52.5,15)(60,0)
	\qbezier(30,0)(37.5,15)(45,30)
	\put(0,0){\vertex}
	\put(15,60){\vertex}
	\put(45,30){\vertex}
	\put(45,60){\vertex}
	\put(60,0){\vertex}
	\put(30,0){\vertex}
	\put(10,68){\mbox{$a$}}
	\put(-5,-14){\mbox{$b$}}
	\put(45,68){\mbox{$c$}}
	\put(60,-14){\mbox{$d$}}
	\put(34,25){\mbox{$x$}}
	\put(28,-14){\mbox{$y$}}	
}
\end{picture}
\end{center}
\caption{The SPR move}
\label{fig:sprDefinition}
\end{figure}
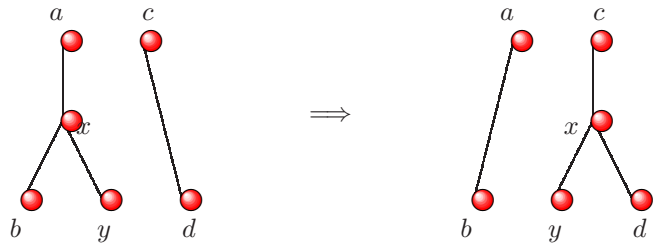

\begin{defn} \label{def:spr}
A \emph{subtree-prune-and-regraft} (SPR) move taking $xy$ from $ab$ to $cd$ is an operation which can be performed on a tree $T \in \B(X)$, as long as $xy,ax,xb,cd$ are distinct edges (vertices $a,b,c,d$ need not be distinct; possibly $a=c$, $b=c$, $a=d$ or $b=d$) and the path from $c$ to $y$ contains $x$. A SPR move is a two step process:
\begin{itemize}
\item \emph{(prune step)} delete edges $ax$, $xb$ and replace them with $ab$, then 
\item \emph{(regraft step)} delete edge $cd$ and replace it with $cx$ and $xd$.
\end{itemize}
Let $\SPR = \SPR(X)$ denote the set of pairs $(A,B)$ such that $A \in \B(X)$ and $B$ is the result of a SPR move on $A$. 
If $0 \in X$ and the path from $0$ to $y$ passes through $x$, then this is called a \emph{rooted-subtree-prune-and-regraft} (rSPR) move. 
Let $\rSPR$ denote the set of pairs $(A,B)$ such that $B$ is the result of an rSPR move on $A$.
\end{defn}

See Figure ~\ref{fig:sprDefinition}. By definition, an rSPR move is a special case of a SPR move. It is not difficult to check that a SPR move is a special case of a TBR move. Informally, a SPR move is any TBR move in which one of $T_1$ or $T_2$ is an isolated vertex or one of the endpoints of the reconnecting edge is the midpoint of one of the new edges. Therefore any valid SPR move performed on a binary tree in $\B(X)$ will result in a tree in $\B(X)$, and moreover: 
	$$ \rSPR \subseteq \SPR \subseteq \TBR. $$
If we think of the edges as being oriented away from the root, then an rSPR move is any SPR move that preserves these orientations.

\begin{prop}
\label{prop:sym2}
If $(A,B) \in \SPR$ then $(B,A) \in \SPR$. Moreover, if $(A,B) \in \rSPR$ then $(B,A) \in \rSPR$.
\end{prop}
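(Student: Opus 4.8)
The plan is to prove the symmetry of $\SPR$ and $\rSPR$ directly, by exhibiting an explicit inverse move, in the same spirit as the proof of Proposition~\ref{prop:sym1}. Suppose $(A,B) \in \SPR$, where $B$ is obtained from $A$ by an SPR move taking $xy$ from $ab$ to $cd$. After the prune step the edges $ax, xb$ are replaced by $ab$, and after the regraft step $cd$ is replaced by $cx$ and $xd$. So in $B$ the vertex $x$ sits on the path between $c$ and $d$, adjacent to $c$, $d$ and $y$, while the old edge $ab$ is now present. The obvious candidate for the inverse is the SPR move on $B$ that takes $xy$ from $cd$ back to $ab$: prune $x$ by deleting $cx, xd$ and restoring $cd$, then regraft by deleting $ab$ and creating $ax, xb$. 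This plainly returns $B$ to $A$, so the only real work is to verify that this reverse operation actually satisfies the conditions in Definition~\ref{def:spr}.

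The key step, and the one I expect to be the main obstacle, is checking the path condition for the inverse move. Definition~\ref{def:spr} requires that the SPR move taking $xy$ from $cd$ to $ab$ in $B$ have the property that the path from $a$ to $y$ in $B$ (after pruning $x$) passes through $x$ --- equivalently, that $a$ lies ``below'' $x$ relative to $y$ in the appropriate sense. First I would make precise what the path condition means: deleting the subtree hanging off $x$ on the $y$-side splits the remaining tree, and the condition says the regraft target edge lies in the correct component. I would argue that the original path condition (the path from $c$ to $y$ in $A$ contains $x$) together with the structure of the move forces the corresponding condition for the inverse to hold in $B$; intuitively, pruning and regrafting the subtree containing $y$ is a symmetric relationship between the two attachment edges $ab$ and $cd$, so the roles of $ab$ and $cd$ can be interchanged. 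I would also handle the degenerate coincidences among $a,b,c,d$ explicitly, since the definition permits $a=c$, $b=c$, $a=d$ or $b=d$, and confirm the requisite edges ($xy$, $cx$, $xd$, $ab$) remain distinct in $B$ so that the inverse move is well defined.

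For the rSPR case I would add the observation that rSPR is exactly SPR with the extra requirement that the path from the root $0$ to $y$ passes through $x$. Since the subtree being moved (the one containing $y$) is the same in both the forward and reverse moves, and the vertex $x$ and leaf $y$ play identical roles, the condition ``the path from $0$ to $y$ passes through $x$'' is a statement purely about which side of $x$ the root and $y$ lie on; this is unchanged by relocating the attachment point of $x$ from $cd$ to $ab$, because neither $0$ nor $y$ is moved. Hence if the forward move is an rSPR move, so is its inverse, and the rSPR statement follows from the SPR argument together with this remark.

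Alternatively, I note one could derive the SPR symmetry more cheaply from Proposition~\ref{prop:sym1}: an SPR move is the special TBR move in which one endpoint of the reconnecting edge is the midpoint of a new edge, and the inverse TBR constructed in Proposition~\ref{prop:sym1} preserves this special form, so $(B,A)$ is again an SPR rather than a general TBR. This shortcut avoids reasoning about the path condition directly, but it pushes the work into verifying that the $f^{-1}$ construction respects the SPR restriction; I would keep the explicit reverse-move argument as the primary approach since it also handles $\rSPR$ uniformly.
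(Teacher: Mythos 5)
Your primary argument---exhibiting the inverse move taking $xy$ from $cd$ back to $ab$---is exactly the paper's proof of Proposition~\ref{prop:sym2}; the paper simply asserts this inverse move works without spelling out the verification of the path condition and edge-distinctness that you propose to check. Your extra care (and the alternative route via Proposition~\ref{prop:sym1}) is fine but not a different approach, so this is essentially the same proof.
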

\begin{proof} 
Suppose that we start with a tree $A$ and perform the SPR move (or similarly the rSPR move) taking $xy$ from $ab$ to $cd$, and thus obtain $B$. Then starting at $B$, we can perform the `inverse' SPR move (or rSPR move) taking $xy$ from $cd$ to $ab$, and thus we obtain $A$. 
\end{proof}

Not all SPR moves can be achieved with a single rSPR move. Indeed, we may need an arbitrary number of rSPR moves to simulate one SPR move. For example the trees in Figure \ref{fig:rsprnotspr} differ by a single SPR by pruning off the single leaf labelled $0$, and then regrafting it appropriately. In Example~\ref{ex:spr_using_rspr} we will show that at least $\frac{n-3}{2}$ rSPR moves are required to transform one into the other.

\begin{figure}
\begin{center}
\setlength{\unitlength}{1pt} 
\begin{picture}(200,95)(-10,0)
	\qbezier(40,60)(40,67.5)(40,75)
	\qbezier(0,0)(20,30)(40,60)
	\qbezier(20,0)(15,7.5)(10,15)
	\qbezier(40,0)(30,15)(20,30)
	\qbezier(80,0)(60,30)(40,60)
	\put(0,0){\vertex}
	\put(20,0){\vertex}
	\put(40,0){\vertex}
	\put(80,0){\vertex}
	\put(40,75){\vertex}
	\put(0,-13){\mbox{$1$}}
	\put(20,-13){\mbox{$2$}}
	\put(40,-13){\mbox{$3$}}
	\put(56,-2){\mbox{$\cdots$}}
	\put(80,-13){\mbox{$n$}}
	\put(45,83){\mbox{$0$}}
\put(120,0){
	\qbezier(40,60)(40,67.5)(40,75)
	\qbezier(0,0)(20,30)(40,60)
	\qbezier(20,0)(35,22.5)(50,45)
	\qbezier(40,0)(50,15)(60,30)
	\qbezier(80,0)(60,30)(40,60)
	\put(0,0){\vertex}
	\put(20,0){\vertex}
	\put(40,0){\vertex}
	\put(80,0){\vertex}
	\put(40,75){\vertex}
	\put(0,-13){\mbox{$1$}}
	\put(20,-13){\mbox{$2$}}
	\put(40,-13){\mbox{$3$}}
	\put(56,-2){\mbox{$\cdots$}}
	\put(80,-13){\mbox{$n$}}
	\put(45,83){\mbox{$0$}}
}
\end{picture}
\end{center}
\caption{Trees that differ by a single SPR, but at least $\frac{n-3}{2}$ rSPR moves.}
\label{fig:rsprnotspr}
\end{figure}
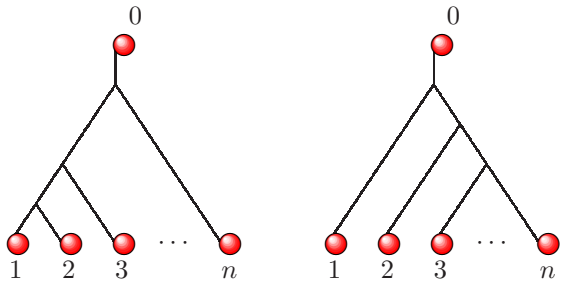

The following proposition is Lemma 2.7 (2b) in \cite{allen2001}.

\begin{prop}
\label{prop:tbrusingsprs}
Any TBR move can be achieved using at most $2$ SPRs; if $(A,B) \in \TBR$, then there exists some $C$ such that $(A,C),(C,B) \in \SPR$.
\end{prop}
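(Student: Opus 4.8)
The plan is to separate the two things a general $TBR$ move does at once, and realise each by a prune-and-regraft. The key observation is the informal characterisation recorded after Definition~\ref{def:spr}: an $SPR$ move is exactly a $TBR$ move in which one endpoint of the reconnecting edge lies at the midpoint of one of the new edges, equivalently a prune-and-regraft that does \emph{not} re-root the pruned subtree. A general $TBR$ both relocates the pruned part and re-roots it, so the natural strategy is to perform the relocation with one $SPR$ and the re-rooting with a second.

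Concretely, I would write the $TBR$ taking $A$ to $B$ as follows: delete the bisection edge $e = uv$ (with $u,v$ the incident tree vertices), suppress to obtain $T_1 \ni u$ and $T_2 \ni v$ with new edges $n_1, n_2$, and reconnect the midpoint of an edge $f_1$ of $T_1$ to the midpoint of an edge $f_2$ of $T_2$. If $f_1 = n_1$ or $f_2 = n_2$, or one of $T_1, T_2$ is a single leaf, the move is already an $SPR$ and nothing is needed; so assume otherwise. First I would perform the $SPR$ on $A$ that prunes the vertex $u$ together with the whole $v$-side subtree (deleting the two edges from $u$ into $T_1$ and healing them into $n_1$) and regrafts it at $f_1$; call the result $C$. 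Since the pruned part is reattached through its original attaching vertex, this is a genuine $SPR$, and in $C$ the subtree $T_2$ is still intact and attached at $f_1$ via its original root $v$.

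Second, I would perform the $SPR$ on $C$ that prunes the vertex $v$ together with the external subtree $T_1$ (deleting the two edges from $v$ into $T_2$ and healing them into $n_2$) and regrafts it at $f_2$. After this move $v$ sits at the midpoint of $f_2$, the edge $n_2$ has been created, and the connecting edge now joins the midpoint of $f_1$ to the midpoint of $f_2$; this is exactly $B$. To confirm each step is an $SPR$ I would verify the conditions of Definition~\ref{def:spr}: the three edges at the pruned vertex are distinct and differ from the regrafting edge (using $f_1 \neq n_1$, $f_2 \neq n_2$ and the fact that each regrafting edge is not incident with the pruned vertex), and the required path condition holds automatically because the pruned vertex is the unique point joining the two components, so it lies on every path from the regraft site to the moved subtree.

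The main thing to get right is the second move. The first $SPR$ is a plain prune-and-regraft, but the second has to realise the re-rooting of $T_2$, and the work is in identifying the correct vertex to prune and checking Definition~\ref{def:spr} — in particular that pruning $v$ heals $T_2$ into its bisected form (creating $n_2$) while leaving $T_1$ and its attachment at $f_1$ untouched, so that composing the two moves reproduces the original $TBR$. The degenerate cases ($f_1 = n_1$, $f_2 = n_2$, or an isolated-leaf component) only make things easier, requiring a single $SPR$, consistent with the stated bound of at most two.
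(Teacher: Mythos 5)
Your construction is correct, but note that the paper does not prove this proposition at all: it simply cites it as Lemma 2.7(2b) of Allen and Steel, so there is no in-paper argument to compare against. Your proof is a valid self-contained replacement. The decomposition is the natural one: writing the TBR as ``delete $uv$, heal to get $T_1$ (new edge $n_1$) and $T_2$ (new edge $n_2$), reconnect midpoints of $f_1$ and $f_2$'', your first SPR takes $x=u$, $y=v$ and moves the entire $v$-side onto $f_1$ (legitimate since $f_1\neq n_1$ means $f_1$ is an original edge of $A$ not incident with $u$, and every path from $f_1$ to $v$ passes through $u$), and your second SPR takes $x=v$, $y=u$ and re-roots $T_2$ at $f_2$ (legitimate for the symmetric reasons in the intermediate tree $C$, where $v$ still separates $T_2\setminus\{v\}$ from $u$). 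Composing the two visibly yields $T_1$ subdivided at $f_1$, $T_2$ subdivided at $f_2$, joined by $uv$, which is $B$. The only loose end is cosmetic: in the degenerate cases ($f_1=n_1$, $f_2=n_2$, or an isolated-vertex component) you say ``nothing is needed'', but the statement literally asks for a $C$ with $(A,C),(C,B)\in\SPR$; there you should either invoke the observation following Definition~\ref{def:spr} that the TBR is itself a single SPR and note that a single SPR can always be padded to two (do and undo any SPR, or insert an intermediate regraft), or read the proposition as the inequality $d_{SPR}(A,B)\le 2$ for $(A,B)\in\TBR$, which is clearly what is intended.
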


\subsection{Tree Metrics}

The $SPR$-distance (Definition \ref{def:distance}) has been the subject of several publications, for example \cite{allen2001}, \cite{golobof}, \cite{hein1990}. The problem of determining the rSPR-distance between a pair of trees was shown to be $NP$-hard by Bordewich and Semple \cite{bordewich} in $2005$. We shall see later, that all of the values in the following definition are finite.

\begin{defn}
For $A,B \in \B(X)$ and any subtree transfer operation $\chi \in \{ TBR , SPR , rSPR \}$, define the $\chi$-distance between $A$ and $B$ (denoted $d_\chi(A,B)$), to be the minimum number of $\chi$ moves required to change $A$ into $B$. When discussing rSPR moves, we assume $0 \in X$. By Propositions \ref{prop:sym1} and \ref{prop:sym2}, this distance is symmetric. For $n \geq 1$ and $X = [n] \cup \{ 0 \}$, let $D_\chi(n)$ denote the $\chi$-\emph{diameter} of the class $\B(X)$; 
	$$ D_\chi(n) \; := \; \max_{A,B \in \B(X)} \; d_\chi(A,B) $$
and moreover, let $R_\chi(n)$ denote the $\chi$-\emph{radius} of the class $\B(X)$; 
	$$ R_\chi(n) \; := \; \min_{C \in \B(X)} \; \max_{B \in \B(X)} \; d_\chi(B,C). $$
\label{def:distance}
\end{defn}

The following is an immediate consequence of $\rSPR \subseteq \SPR \subseteq \TBR$.

\begin{prop}
\label{obs:trivial_bounds}
For any $\chi \in \{ TBR,SPR,rSPR \}$, and any integer $n>1$, we have $D_{\chi}(n) \geq R_{\chi}(n)$. Moreover, 
	$$ D_{rSPR}(n) \geq D_{SPR}(n) \geq D_{TBR}(n) \quad \mbox{and} \quad R_{rSPR}(n) \geq R_{SPR}(n) \geq R_{TBR}(n). $$
\end{prop}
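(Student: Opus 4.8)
The plan is to reduce everything to a single pointwise inequality between the three distances, after which both displayed chains and the diameter-versus-radius bound follow by elementary monotonicity of $\max$ and $\min$.

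First I would record the key pointwise fact: for every pair $A,B \in \B(X)$,
$$ d_{TBR}(A,B) \; \leq \; d_{SPR}(A,B) \; \leq \; d_{rSPR}(A,B). $$
This is where the containment $\rSPR \subseteq \SPR \subseteq \TBR$ does all the work. If $B$ can be reached from $A$ by a sequence of $k$ rSPR moves, then since each such move is in particular an SPR move (as $\rSPR \subseteq \SPR$), the very same sequence is a sequence of $k$ SPR moves; hence the minimum number of SPR moves needed cannot exceed $k$, and taking $k = d_{rSPR}(A,B)$ gives $d_{SPR}(A,B) \leq d_{rSPR}(A,B)$. The inequality $d_{TBR}(A,B) \leq d_{SPR}(A,B)$ is identical, using $\SPR \subseteq \TBR$.

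Next I would obtain the two displayed chains by applying $\max$ and $\min$ to this pointwise inequality. For the diameter, taking the maximum over all $A,B$ preserves the inequalities and gives $D_{TBR}(n) \leq D_{SPR}(n) \leq D_{rSPR}(n)$, which is the asserted chain. For the radius I would argue in two stages: fixing a center $C$ and taking the maximum over $B$ yields $\max_B d_{TBR}(B,C) \leq \max_B d_{SPR}(B,C) \leq \max_B d_{rSPR}(B,C)$; since these hold for every $C$, taking the minimum over $C$ (which is monotone with respect to pointwise domination) gives $R_{TBR}(n) \leq R_{SPR}(n) \leq R_{rSPR}(n)$.

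Finally, for $D_\chi(n) \geq R_\chi(n)$ I would invoke the standard metric fact that the diameter dominates the radius, applied to each fixed $\chi$ separately. For any particular center $C_0 \in \B(X)$, every value $d_\chi(B,C_0)$ is one of the pairwise distances appearing in the definition of $D_\chi(n)$, so $\max_B d_\chi(B,C_0) \leq D_\chi(n)$; since $R_\chi(n)$ is the minimum of $\max_B d_\chi(B,C)$ over all choices of $C$, it is at most this particular value, whence $R_\chi(n) \leq D_\chi(n)$. I do not expect a genuine obstacle: the only content beyond bookkeeping is the containment of move types, which is already established in the excerpt, and the sole point requiring mild care is the direction of the inequality when passing a pointwise bound through the minimum in the radius computation. (Implicitly one also uses that all distances are finite, as noted in the text, so that the relevant maxima and minima are attained.)
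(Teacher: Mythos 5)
Your proposal is correct and follows exactly the route the paper intends: the paper gives no written proof, stating only that the proposition is ``an immediate consequence of $\rSPR \subseteq \SPR \subseteq \TBR$,'' and your argument is precisely the elaboration of that remark (pointwise comparison of distances from the containment of move sets, then monotonicity of $\max$ and $\min$, plus the standard radius-versus-diameter bound).
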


\subsection{Main Theorems}

The values of the radius and diameter for $n \leq 6$ are given in Figure~\ref{tab:small_values}. The asymptotic values of the radius and diameter are given in the following theorem.

\begin{thm}
For each $\chi \in \{ TBR,SPR,rSPR \}$, both $D_\chi(n)$ and $R_\chi(n)$ are $n-\Theta(\sqrt{n})$.
\label{thm:extreme}
\end{thm}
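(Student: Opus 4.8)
The plan is to exploit the chain of inequalities in Proposition \ref{obs:trivial_bounds}. Since $D_{rSPR}(n)$ dominates all six quantities and $R_{TBR}(n)$ is dominated by all six, it suffices to prove one upper bound $D_{rSPR}(n)\le n-c_1\sqrt n$ and one lower bound $R_{TBR}(n)\ge n-c_2\sqrt n$ for positive constants $c_1,c_2$; these two estimates sandwich every $D_\chi(n)$ and $R_\chi(n)$ between $n-c_2\sqrt n$ and $n-c_1\sqrt n$, giving $n-\Theta(\sqrt n)$ in each case. The natural tool, which I would develop first, is the agreement-forest characterisation of the distances: $d_{TBR}(A,B)$ is one less than the number of components of a maximum agreement forest of $A$ and $B$, and $d_{rSPR}(A,B)$ is one less than the number of components of a maximum rooted agreement forest. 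Writing the number of leaves as $n+1$, a maximum forest with $k$ components gives $d=n-s$, where $s:=(n+1)-k$ is the total \emph{saving} (the excess of leaves over components). Thus $d_\chi(A,B)=n-\Theta(\sqrt n)$ is equivalent to the saving being $\Theta(\sqrt n)$, and the whole theorem reduces to two statements about savings.

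For the upper bound I must show that every pair of (rooted) trees admits an agreement forest with saving at least $c_1\sqrt n$; equivalently, that no maximum agreement forest is too fine. This is the extension of the Ding--Grunewald--Humphries upper bound to the rooted case, and it is the step I expect to be the main obstacle, because it must survive adversarial labellings for which there are no common cherries and only tiny common subtrees, so the saving cannot come from a single shared subtree (recall that Martin--Thatte guarantee only $\Omega(\sqrt{\log n})$ there). The approach is to build the saving out of a long common \emph{chain}: fix a planar drawing of $A$, read its leaves in order, and run an Erd\H{o}s--Szekeres/Dilworth argument on the positions these leaves occupy in $B$ to extract a subset $S$ of $\Omega(\sqrt n)$ leaves sitting monotonically in both trees. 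The delicate part, and where the main technical work lies, is arranging the extraction so that the selected leaves assemble into genuine agreement-forest components, i.e.\ so that the restrictions of $A$ and $B$ to $S$ really do agree; this yields saving $\Omega(\sqrt n)$ and hence $d_{rSPR}\le n-c_1\sqrt n$.

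For the lower bound I need, for every tree $C$, a tree $B$ whose maximum agreement forest with $C$ has saving at most $c_2\sqrt n$, so that $d_{TBR}(B,C)\ge n-c_2\sqrt n$. Because the distance is unchanged when both trees are relabelled simultaneously, I may assume $C$ carries a canonical labelling of its shape and am free to choose $B$; it is then enough to show that the shape of $C$ and a caterpillar admit a joint labelling this far apart. The guiding case is $C$ also a caterpillar: a caterpillar is essentially a linear order of its leaves, an agreement-forest component inside it is an order-monotone block, and matching blocks between two caterpillars traces out common monotone subsequences of two permutations; choosing the orders so that the longest increasing and longest decreasing subsequences are both $O(\sqrt n)$ --- possible by Erd\H{o}s--Szekeres, and typical for a uniformly random permutation --- caps the saving at $O(\sqrt n)$. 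For a general shape $C$ I would replace this exact analysis by a first-moment argument: bound the number of candidate agreement forests of saving exceeding $c_2\sqrt n$ together with the probability that a uniformly random labelling realises any one of them, and show the expected number is below $1$ once $c_2$ is a large enough constant, so that some labelling leaves no large agreement.

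Finally I would assemble the pieces: the two displayed estimates, with Proposition \ref{obs:trivial_bounds}, pin every $D_\chi(n)$ and $R_\chi(n)$ to $n-\Theta(\sqrt n)$. Throughout, the rooted cases are handled by treating the leaf $0$ as a distinguished leaf and using rooted agreement forests; the first-moment count in the lower bound and the chain extraction in the upper bound are insensitive to this, so the constants $c_1,c_2$ can be taken uniform across all three operations. The single hardest ingredient remains the universal lower bound on the saving (equivalently the upper bound on the diameter), since it alone must hold for every labelling rather than for a favourably chosen one.
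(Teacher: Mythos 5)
Your overall skeleton --- reducing to an upper bound on $D_{rSPR}$ and a lower bound on $R_{TBR}$ via Proposition \ref{obs:trivial_bounds}, and translating both into statements about the ``saving'' of an agreement forest --- is exactly the paper's. But both of your key steps rest on devices that provably cannot deliver a $\Theta(\sqrt{n})$ saving. For the upper bound, you propose to extract a single common chain of $\Omega(\sqrt n)$ leaves by Erd\H{o}s--Szekeres on leaf orders. This fails for non-caterpillar shapes: if $A$ is a balanced binary tree, any $S$ with $A/S$ a caterpillar has $|S|=O(\log n)$ (all internal vertices of $A/S$ must lie on a single leaf-to-leaf path of $A$, which has length $O(\log n)$), so no common chain of size $\Omega(\sqrt n)$ exists; more generally the maximum agreement subtree is only guaranteed to have size $\Omega(\sqrt{\log n})$ (Martin--Thatte, as the introduction notes), so the saving cannot come from one large component at all. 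The paper's Lemma \ref{lem:ub_D_raf} instead builds it from $\Omega(\sqrt n)$ \emph{disjoint two-element components}: each tree is cut into roughly $\sqrt{(n+1)/3}$ connected blocks of $O(\sqrt n)$ leaves (Lemma \ref{lem:divide_tree_equally}), and a pigeonhole count over ``tokens'' produces $\Omega(\sqrt n)$ disjoint pairs of labels, each pair lying inside a single block of every tree; each such pair is a size-two part of a rooted agreement forest.

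For the lower bound, your first-moment argument over uniformly random labellings is contradicted by the paper's own Theorem \ref{thm:expectation}: for any shapes, a uniformly random labelling admits an agreement forest with saving $\Theta(n^{2/3})\gg c_2\sqrt n$ with probability bounded away from $0$ (indeed the expected saving is $\Omega(n^{2/3})$ by Lemma \ref{lem:ub_expect_raf}), so the expected number of forests with saving exceeding $c_2\sqrt n$ is not below $1$ for any constant $c_2$; random labellings are simply not extremal here, and no choice of constant rescues a threshold of order $\sqrt n$ against a typical saving of order $n^{2/3}$. The paper's Lemma \ref{lem:lb_R_uaf} therefore constructs the adversarial labellings deterministically: Lemma \ref{lem:orderings} builds $k$ permutations of the labels (via a base-$b$ digit construction) so that every pair of labels is separated by more than $b^{k-1}-2b^{k-3}$ positions in at least one of them; labelling the blocks of each tree consecutively in the order of the corresponding permutation guarantees no pair of labels shares a block in every tree, and Lemma \ref{lem:deleting_edges} converts this into $M(\A)\ge n-k(t-1)+1$ with $t=O(\sqrt[k]{n})$. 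Your caterpillar guiding case is sound, but to complete the proof you would need to replace both the common-chain extraction and the random-labelling union bound with constructions of this deterministic, many-small-components kind.
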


By Observation \ref{obs:trivial_bounds}, it suffices to give an upper bound for the rSPR-diameter and a lower bound for the TBR-radius, both of the form $n-\Theta(\sqrt{n})$. These bounds are given explicitly in Lemmas \ref{lem:ub_D_raf} and \ref{lem:lb_R_uaf} respectively. 

\begin{thm}
\label{thm:expectation}
	If $A$ and $B$ are chosen uniformly at random from $\B( [n] \cup \{ 0 \} )$, then 
	$$ \E[d_\chi(A,B)] = n - \Theta(n^{2/3}) $$
for any $\chi \in \{ TBR,SPR,rSPR \}$.
\end{thm}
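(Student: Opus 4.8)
The plan is to translate everything into agreement forests and then into a packing problem about pairs of leaves. Writing $\sigma_\chi(A,B):=n-d_\chi(A,B)$, the agreement-forest characterisation underlying Lemmas \ref{lem:ub_D_raf} and \ref{lem:lb_R_uaf} says that $d_\chi(A,B)+1$ is the number of components of a maximum $\chi$-agreement forest, so $\sigma_\chi$ is the total saving $\sum_i(|X_i|-1)$ of a best forest $\{X_i\}$. Because $\rSPR\subseteq\SPR\subseteq\TBR$ gives $d_{TBR}\le d_{SPR}\le d_{rSPR}$ and hence $\sigma_{rSPR}\le\sigma_{SPR}\le\sigma_{TBR}$, the theorem reduces to the two estimates $\E[\sigma_{rSPR}]=\Omega(n^{2/3})$ and $\E[\sigma_{TBR}]=\O(n^{2/3})$. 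I would use two standard features of the uniform model on $\B([n]\cup\{0\})$: the restriction of a uniform tree to any leaf subset is again uniform, and for a fixed pair of leaves the probability of being a cherry is $\tfrac{1}{2n-3}$ while the probability of lying within tree-distance $b$ is $\Theta(b^{2}/n)$ (reflecting the $\Theta(\sqrt n)$ scale of typical distances).

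The heart of the argument is a single estimate about size-$2$ blocks. A family of $t$ pairwise-disjoint leaf-pairs can serve as blocks of an agreement forest exactly when their connecting paths are vertex-disjoint in both $A$ and $B$, and whether this holds depends only on the two restrictions to the $2t$ leaves involved. The combinatorial point is that a perfect matching has vertex-disjoint connecting paths in a binary tree iff it is ``cherry-compatible'': every cherry must join two matched leaves, and iterating identifies such a tree with a cherry-expansion of a binary tree on $t$ super-leaves. Thus exactly $(2t-5)!!$ of the $(4t-5)!!$ binary trees on $2t$ leaves realise a given matching, so by independence and uniformity of restrictions a fixed $t$-family is vertex-disjoint in both trees with probability $\big((2t-5)!!/(4t-5)!!\big)^{2}=\big(\Theta(1/t)\big)^{2t}$. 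Multiplying by the $\big(\Theta(n^{2}/t)\big)^{t}$ choices of $t$ disjoint pairs, the expected number of admissible $t$-families is $\big(\Theta(n^{2}/t^{3})\big)^{t}$.

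Both bounds now fall out of this count at the threshold $t^{3}\asymp n^{2}$. For the upper bound, the expected number of admissible $t$-families drops below $1$ as soon as $t\ge Cn^{2/3}$, so with overwhelming probability no agreement forest built from pairs saves more than $\O(n^{2/3})$; summing the geometrically small tail gives the pair-contribution to $\E[\sigma]$ as $\O(n^{2/3})$. For the lower bound, the same count is large when $t\le cn^{2/3}$, and a second-moment argument then produces, with high probability, an admissible family of size $\Omega(n^{2/3})$; since such a family is a valid agreement forest of pairs and singletons even in the rooted sense (each pair has a cherry as its restriction in both trees, and the root may be left as a singleton), this yields $\E[\sigma_{rSPR}]=\Omega(n^{2/3})$. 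A transparent special case is to take the pairs to be cherries of $A$ that happen to fall within $B$-distance $n^{1/3}$, of which there are $\Theta(n^{2/3})$ in expectation and which are automatically disjoint in $A$.

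The main obstacle is the passage from size-$2$ blocks to arbitrary agreement forests in the upper bound, i.e.\ ruling out that larger blocks save more. I would handle this by extending the first moment over block-size profiles: a block of size $c$ demands the additional coincidence $A/X_i\cong B/X_i$, of probability $1/(2c-5)!!$ per tree, on top of vertex-disjointness, so each extra unit of saving again costs a factor that keeps the $t^{3}\asymp n^{2}$ threshold in force and makes larger blocks strictly rarer; equivalently, peeling cherries from each block's common restriction converts a saving-$s$ forest into $s$ matched pairs that are vertex-disjoint in both contracted trees, to which the estimate of the previous paragraph applies. The delicate part is verifying that summing over all profiles, and tracking the cherry-peeling bookkeeping, introduces no extra sub-exponential factor large enough to move the exponent off $2/3$; once this is controlled, $\E[\sigma_{TBR}]=\O(n^{2/3})$, and Proposition \ref{obs:trivial_bounds} turns the two one-sided estimates into $\E[d_\chi]=n-\Theta(n^{2/3})$ for every $\chi\in\{TBR,SPR,rSPR\}$.
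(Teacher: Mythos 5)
Your reduction to the two one-sided estimates (a lower bound on the expected saving for $rSPR$ and an upper bound for $TBR$) matches the paper's, and your first-moment count $\big(\Theta(n^2/t^3)\big)^t$ does locate the correct threshold $t\asymp n^{2/3}$. However, both halves of the argument have genuine gaps. For the bound $\E[d_{TBR}]\geq n-O(n^{2/3})$, your first moment only controls agreement forests whose non-singleton blocks are pairs, and the proposed ``cherry-peeling'' reduction from general forests fails as stated: a block of size $3$ contributes saving $2$, but it cannot be converted into two disjoint pairs of \emph{original} leaves with vertex-disjoint paths (the second peeled ``pair'' involves a contracted super-leaf). The peeled pairs live in successively contracted trees, which are not restrictions of $A$ and $B$ to leaf subsets, so the key probability $(2t-5)!!/(4t-5)!!$, which rests on the uniform-restriction property, does not apply to them; repairing this requires a union bound over block-size profiles with entropy estimates you have not supplied, and it is not mere bookkeeping. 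The paper avoids enumerating forests altogether via the deterministic Lemma~\ref{lem:deleting_edges}: partition each tree into $t=\lceil 2(n+1)^{2/3}\rceil$ connected parts of fewer than $(n+1)^{1/3}$ leaves; \emph{any} agreement forest, after deleting the at most $k(t-1)$ hot edges, splits into at least $n+1-z$ components, where $z$ counts the pairs not separated by either partition and satisfies $\E[z]=O(n^{2/3})$ by a one-line computation (Lemma~\ref{lem:lb_expect_uaf}).

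For the bound $\E[d_{rSPR}]\leq n-\Omega(n^{2/3})$, the second-moment argument is asserted rather than carried out, and at the scale $t=\Theta(n^{2/3})$ the variance is dominated by heavily overlapping families, so the method is not routine. Your ``transparent special case'' also does not close the gap: cherries of $A$ whose leaves lie within $B$-distance $n^{1/3}$ are disjoint in $A$, but nothing makes their connecting paths disjoint in $B$ (their total $B$-length is $\Theta(n)$, so they may overlap massively), and the claim that two random leaves lie within $B$-distance $b$ with probability $\Theta(b^2/n)$ is itself an unproved input. The paper's Lemma~\ref{lem:ub_expect_raf} instead partitions each tree into $s=\Theta(n^{2/3})$ subtrees of $b=\Theta(n^{1/3})$ leaves and counts \emph{super} pairs --- pairs landing together in a single subtree in every tree, with no other such pair sharing their subtree --- which are disjoint with disjoint paths by construction and number $\Omega(n^{2/3})$ in expectation. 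Note finally that the paper's upper bound needs only random labellings of arbitrary fixed tree shapes, whereas your uniform-restriction device forces the tree shapes themselves to be uniform, so your route would prove a strictly weaker statement even if completed.
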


Again by Observation \ref{obs:trivial_bounds}, it suffices to give an upper bound for the expected rSPR-distance and a lower bound for the expected TBR-distance, both of the form $n-\Theta(n^{2/3})$. These bounds are given explicitly in Lemmas \ref{lem:ub_expect_raf} and \ref{lem:lb_expect_uaf} respectively.

\begin{figure}
$$ \begin{array}{c|c|c|c|c|c|c}
n & R_{TBR}(n) & D_{TBR}(n) & R_{SPR}(n) & D_{SPR}(n) & R_{rSPR}(n) & D_{rSPR}(n) \\ \hline
2 & 0 & 0 & 0 & 0 & 0 & 0 \\ 
3 & 1 & 1 & 1 & 1 & 1 & 1 \\ 
4 & 2 & 2 & 2 & 2 & 2 & 2 \\ 
5 & 2 & 2 & 2 & 2 & 2 & 3 \\ 
6 & 3 & 3 & 3 & 3 & 3 & 3
\end{array} $$
\caption{Small values of the radius and diameter}
\label{tab:small_values}
\end{figure}

\section{Upper Bound for the rSPR Diameter}

The upper-bound in Theorem~\ref{thm:extreme} is proved in this section using rooted agreement forests. The definition of a rooted agreement forest given here is equivalent to that used by Bordewich and Semple \cite{bordewich}. 

\begin{defn}
\label{def:raf}
Let $0 \in X$, and let $\A \subset \B(X)$ be a set of $k \geq 2$ trees. A \emph{rooted agreement forest} of $\A$ is a partition $L_1, \ldots , L_m$ of $X$, such that for all $A,B \in \A$:
	\begin{itemize}
	\item the subtrees $A|L_1$, $A|L_2$, $\ldots$ , $A|L_m$ are disjoint, and 
	\item $A/(L_j \cup \{ 0 \}) \equiv B/(L_j \cup \{ 0 \})$ for $j = 1, \ldots ,m$.
	\end{itemize}
Let $M_r(\A)$ be the minimal possible value of $m$; the minimal number of parts in a rooted agreement forest of $\A$. When $k=2$, we write $M_r(A,B)$ for $M_r(\{ A,B \})$.
\end{defn}

When $n \geq 2$ and $X = \{ 0,1, \ldots ,n \}$, we can always construct a trivial rooted agreement forest with $n-1$ parts by setting $L_1 = \{ 0 , 1 , 2 \}$ and $L_i = \{ i+1 \}$ for $i = 2, \ldots ,n-1$. Therefore $M_r(A,B) \leq n-1$ for any $A,B \in \B(X)$. Three minimal agreement forests are depicted in Figure \ref{fig:rafDefinition}.

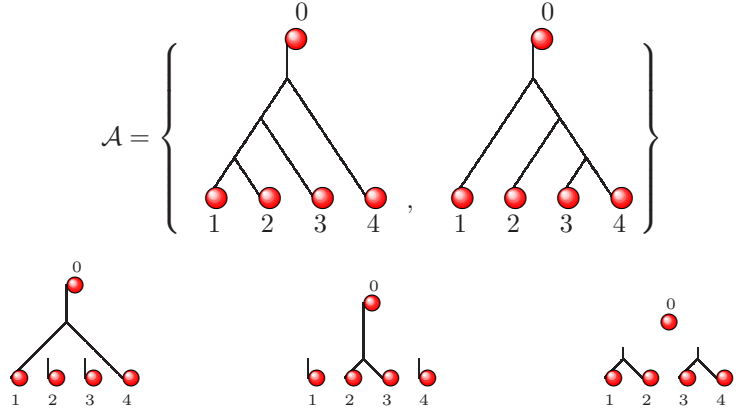
\begin{figure}
\begin{center}
\setlength{\unitlength}{1pt} 
$$ \A = \left\{ 
\begin{picture}(80,40)(-10,20)
	\qbezier(30,60)(30,50)(30,45)
	\qbezier(0,0)(15,22.5)(30,45)
	\qbezier(20,0)(15,7.5)(10,15)
	\qbezier(40,0)(30,15)(20,30)
	\qbezier(60,0)(45,22.5)(30,45)
	\put(0,0){\vertex}
	\put(20,0){\vertex}
	\put(40,0){\vertex}
	\put(60,0){\vertex}
	\put(30,60){\vertex}
	\put(33,67){\mbox{$0$}}
	\put(0,-13){\mbox{$1$}}
	\put(20,-13){\mbox{$2$}}
	\put(40,-13){\mbox{$3$}}
	\put(60,-13){\mbox{$4$}}
\end{picture}
\begin{array}{c} $\;$ \\ $\;$ \\ $\;$ \\ $\;$ \\ , \end{array} 
\begin{picture}(80,40)(-10,20)
	\qbezier(30,60)(30,50)(30,45)
	\qbezier(0,0)(15,22.5)(30,45)
	\qbezier(20,0)(30,15)(40,30)
	\qbezier(40,0)(45,7.5)(50,15)
	\qbezier(60,0)(45,22.5)(30,45)
	\put(0,0){\vertex}
	\put(20,0){\vertex}
	\put(40,0){\vertex}
	\put(60,0){\vertex}
	\put(30,60){\vertex}
	\put(33,67){\mbox{$0$}}
	\put(0,-13){\mbox{$1$}}
	\put(20,-13){\mbox{$2$}}
	\put(40,-13){\mbox{$3$}}
	\put(60,-13){\mbox{$4$}}
\end{picture}
\right\} $$

\vspace{5mm}

\setlength{\unitlength}{0.7pt} 
\begin{picture}(400,50)
\put(0,0){
	\qbezier(30,30)(30,40)(30,50)
	\qbezier(30,30)(15,15)(0,0)
	\qbezier(30,30)(45,15)(60,0)
	\qbezier(20,0)(20,5)(20,10)
	\qbezier(40,0)(40,5)(40,10)
	\put(0,0){\vertex}
	\put(20,0){\vertex}
	\put(40,0){\vertex}
	\put(60,0){\vertex}
	\put(30,50){\vertex}
	\put(33,57){\mbox{\tiny $0$}}
	\put(0,-15){\mbox{\tiny $1$}}
	\put(20,-15){\mbox{\tiny $2$}}
	\put(40,-15){\mbox{\tiny $3$}}
	\put(60,-15){\mbox{\tiny $4$}}
}
\put(160,0){
	\qbezier(30,40)(30,25)(30,10)
	\qbezier(30,10)(25,5)(20,0)
	\qbezier(30,10)(35,5)(40,0)
	\qbezier(0,0)(0,5)(0,10)
	\qbezier(60,0)(60,5)(60,10)
	\put(0,0){\vertex}
	\put(20,0){\vertex}
	\put(40,0){\vertex}
	\put(60,0){\vertex}
	\put(30,40){\vertex}
	\put(33,47){\mbox{\tiny $0$}}
	\put(0,-15){\mbox{\tiny $1$}}
	\put(20,-15){\mbox{\tiny $2$}}
	\put(40,-15){\mbox{\tiny $3$}}
	\put(60,-15){\mbox{\tiny $4$}}
}
\put(320,0){
	\qbezier(10,10)(5,5)(0,0)
	\qbezier(10,10)(15,5)(20,0)
	\qbezier(50,10)(45,5)(40,0)
	\qbezier(50,10)(55,5)(60,0)
	\qbezier(10,10)(10,13)(10,16)
	\qbezier(50,10)(50,13)(50,16)
	\put(0,0){\vertex}
	\put(20,0){\vertex}
	\put(40,0){\vertex}
	\put(60,0){\vertex}
	\put(30,30){\vertex}
	\put(33,37){\mbox{\tiny $0$}}
	\put(0,-15){\mbox{\tiny $1$}}
	\put(20,-15){\mbox{\tiny $2$}}
	\put(40,-15){\mbox{\tiny $3$}}
	\put(60,-15){\mbox{\tiny $4$}}
}
\end{picture}
\end{center}
\caption{Three minimal rooted agreement forests.}
\label{fig:rafDefinition} 
\end{figure}

The following result is proved by Bordewich and Semple \cite{bordewich}, building on the work of Hein et al \cite{hein1996}. We give a proof here for completeness.

\begin{lem}[Rooted Agreement Forest Lemma]
\label{lem:rafl}
	If $A$ and $B$ are any two trees in $\B(X)$, then 
	$$ M_r(A,B) = d_{rSPR}(A,B) + 1. $$
\end{lem}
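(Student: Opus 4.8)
The plan is to establish the identity $M_r(A,B) = d_{rSPR}(A,B) + 1$ by proving two inequalities, each corresponding to one direction of the correspondence between rSPR moves and rooted agreement forests.

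The plan is to prove the identity by establishing the two inequalities $d_{rSPR}(A,B) \le M_r(A,B) - 1$ and $M_r(A,B) \le d_{rSPR}(A,B) + 1$ separately, each reflecting one direction of the correspondence between rSPR moves and the parts of a rooted agreement forest. This is the standard Bordewich--Semple / Hein strategy, and I would present it as two lemmas glued together.

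For the first inequality I would argue by induction on $m = M_r(A,B)$, the idea being to reassemble $B$ from the pieces of a minimal forest one subtree at a time. If $m=1$, the single part forces $A \equiv B$ and no moves are needed. If $m \ge 2$, fix a minimal rooted agreement forest $L_1,\dots,L_m$ and contract each subtree $B|L_j$ to a point to obtain a quotient tree on $m$ nodes. I would select a part $L_j$ with $0 \notin L_j$ that is pendant (and lowest) in this rooted quotient, so that the subtree $A|L_j$ can be pruned from $A$ and regrafted at the location dictated by $B$ by a single root-orientation-preserving rSPR move. Because each part already has matching rooted quotients in $A$ and $B$, after this single move the part $L_j$ and its neighbour may be merged into one, yielding a rooted agreement forest of the new tree and $B$ with $m-1$ parts; the induction hypothesis then gives at most $m-1$ moves in total.

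For the reverse inequality I would show the key ``Lipschitz'' fact that a single rSPR move changes the forest number by at most one: if $(A,A') \in \rSPR$ then $M_r(A,B) \le M_r(A',B) + 1$. Granting this, telescoping along a geodesic $A = T_0, T_1, \dots, T_k = B$ with $k = d_{rSPR}(A,B)$ gives $M_r(A,B) \le M_r(B,B) + k = 1 + d_{rSPR}(A,B)$, since trivially $M_r(B,B)=1$. To prove the Lipschitz fact, let $L_1,\dots,L_m$ be a minimal rooted agreement forest of $\{A',B\}$ and let $S$ (with $0 \notin S$) be the leaf set of the subtree moved by the rSPR taking $A$ to $A'$. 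Since pruning $S$ produces the same tree from $A$ and from $A'$, and since passing to a subset of labels preserves isomorphism of rooted quotients, each restricted part $L_j \setminus S$ still has matching quotients in $A$ and $B$. I would then refine the partition by splitting off the moved subtree, producing a forest with at most $m+1$ parts.

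The main obstacle is precisely this refinement step. It is clean when $S$ is contained in a single part $L_{j_0}$: split $L_{j_0}$ into $S$ and $L_{j_0}\setminus S$, and verify disjointness together with the matching condition on $S$ using that $S$ hangs as a rigid pendant subtree identically in $A$ and $A'$. The delicate case is when the leaves of $S$ are spread across several parts of the minimal forest, where one must argue that the relocation of $S$ can still be absorbed by a single additional cut, exhibiting one part to split so that disjointness of the induced subtrees in $A$ is restored and every part matches $B$ after adjoining the root. Getting this case analysis right, while carefully tracking the effect of suppressing degree-two vertices and the special role of the root label $0$ in the $\cup\{0\}$ appearing in Definition~\ref{def:raf}, is where the real work of the proof lies.
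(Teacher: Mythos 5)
Your first inequality, $d_{rSPR}(A,B)\le M_r(A,B)-1$, is essentially the paper's argument: the same induction on $m$, removing one part of a minimal forest per rSPR move and merging it with a neighbouring part. (Two harmless imprecisions: an rSPR detaches \emph{everything} below the pruned edge, which may strictly contain $A|L_j$, so you should prune at the first edge on the path from $A|L_j$ towards the root and note that the other parts ride along rigidly without changing their rooted quotients; the paper sidesteps your ``lowest pendant part'' choice by always merging into the root part $L_1$.) For the reverse inequality you take a genuinely different route. The paper performs all $d$ prunes of a geodesic simultaneously and observes that the resulting $d+1$ components already form a rooted agreement forest of $A$ and $B$ --- a one-stroke global construction. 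You instead prove a per-move Lipschitz bound $M_r(A,B)\le M_r(A',B)+1$ and telescope; this is the original Bordewich--Semple induction, and it is a perfectly viable (if longer) alternative.

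The problem is that you do not actually prove the Lipschitz bound: you correctly identify the hard case (the moved leaf set $S$ meeting several parts of a minimal forest of $\{A',B\}$) and then declare that ``getting this case analysis right is where the real work lies'' without doing it. The missing idea is a one-line uniqueness observation. Let $v$ be the vertex of $A'$ at which the moved subtree attaches after the regraft. If a part $L_j$ contains a label in $S$ and a label outside $S$, then $A'|L_j$ contains a path between them, and every such path passes through $v$; since the subtrees $A'|L_1,\dots,A'|L_m$ are pairwise disjoint, \emph{at most one} part $L_{j_0}$ can straddle $S$ and $X\setminus S$. Splitting that single part into $L_{j_0}\cap S$ and $L_{j_0}\setminus S$ yields a partition with at most $m+1$ parts in which every part lies inside $S$ or inside $X\setminus S$; because the moved subtree is pendant and $0\notin S$, one has $A/(S\cup\{0\})\equiv A'/(S\cup\{0\})$ and $A/(X\setminus S)\equiv A'/(X\setminus S)$, from which both the disjointness and the rooted-quotient conditions transfer from $A'$ to $A$. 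Without this observation the claim that one ``additional cut'' suffices is unsupported, so as written the proof of $M_r(A,B)\le d_{rSPR}(A,B)+1$ is incomplete.
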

\begin{proof}
	Let $A=A_0,A_1, \ldots ,A_d=B$ be a sequence with $(A_{i-1},A_i) \in \rSPR$ for all $i = 1,2, \ldots ,d$. If we perform all the \emph{prunes} of these rSPR moves (and none of the \emph{regrafts}), then the result will be a partition of the leaves into a valid (rooted) agreement forest for $A$ and $B$. Therefore 
	$$ M_r(A_0,A_d) \leq d+1 = d_{rSPR}(A_0,A_d)+1. $$
Now we will show that $d_{rSPR}(A,B)+1 \leq M_r(A,B)$, by induction on $m := M_r(A,B)$. For the base case $m=1$, the only agreement forest with one component would be $A=B$ itself, thence $d_{rSPR}(A,B)=0$. For the inductive step $m \geq 2$, it suffices to show that there exists some $A^\prime \in \B(X)$ such that $(A,A^\prime) \in \rSPR$ and $M_r(A^\prime ,B) \leq m-1$. Let $F = (L_1, \ldots ,L_m)$ be an agreement forest for $(A,B)$ of $m \geq 2$ parts and wlog $0 \in L_1$. We can construct $A^\prime$ from $F$ explicitly:
\begin{itemize}
\item Wlog $B|L_2$ is one of the subtrees of $B$ that is connected to $B|L_1$ by a path $p_B$, which does not intersect $B|L_i$ for any $i>2$. 
\item Let $e = yx \in E(A)$ be the first edge on the path $p_A$ from $A|L_2$ to $A|L_1$, where $y$ is in $A|L_2$ and $x$ is not. If $x$ is a leaf (i.e. if $L_1 = \{ 0 \}$ and $x=0$ is the root) then we do not perform an rSPR move. Otherwise $x$ is a tree vertex. Let $a$ and $b$ be the two neighbours of $x$ other than $y$.
\item Now we can prune $A$ at $e$ (that is, take $xy$ from its current position on $ab$) and then regraft it to form $A^\prime$ such that 
	$$A^\prime / (L_1 \cup L_2) \equiv B / (L_1 \cup L_2).$$ 
In the case that $L_1 = \{ 0 \}$ is a singleton, this involves regrafting at the only edge incident with the root of $A$. Otherwise (if $|L_1| \geq 2$) this involves regrafting at an internal edge of $A|L_1$ corresponding to where the path $p_B$ joins $B|L_1$. 
\end{itemize}
We know that $\{ L_1 \cup L_2,L_3,L_4, \ldots , L_m \}$ is an agreement forest for $(A^\prime ,B)$, because of for all $i>2$:
	\begin{itemize}
	\item $A^\prime | (L_1 \cup L_2)$ does not intersect $A^\prime |L_i$,
	\item $B | (L_1 \cup L_2) = B|L_1 \cup p_B \cup B|L_2$ does not intersect $B|L_i$,
	\item and $A^\prime / (L_1 \cup L_2) \equiv B / (L_1 \cup L_2)$. 
	\end{itemize}
Hence $M_r(A^\prime,B) \leq m-1$, which (by the inductive hypothesis) means that $d_{rSPR}(A^\prime,B) \leq m-2$ and therefore $d_{rSPR}(A,B) \leq m-1$ as required.
\end{proof}

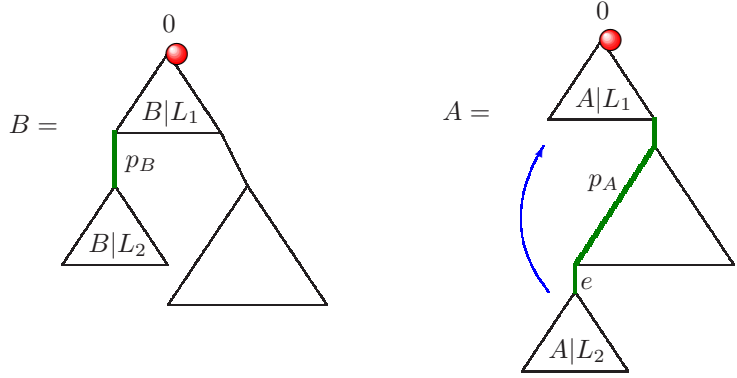
\begin{figure} 
\begin{center}
\setlength{\unitlength}{1pt} 
\begin{picture}(100,130)(0,10)
	\qbezier(40,130)(30,115)(20,100)
	\qbezier(40,130)(50,115)(60,100)
	\qbezier(20,100)(40,100)(60,100)
	\qbezier(20,80)(10,65)(0,50)
	\qbezier(20,80)(30,65)(40,50)
	\qbezier(0,50)(20,50)(40,50)
	\qbezier(60,100)(65,90)(70,80)
	\qbezier(70,80)(55,57.5)(40,35)
	\qbezier(70,80)(85,57.5)(100,35)
	\qbezier(40,35)(70,35)(100,35)
	\put(40,130){\vertex}
	\put(38,138){\mbox{$0$}}
	\put(30,105){\mbox{$B|L_1$}}
	\put(10,55){\mbox{$B|L_2$}}
	\put(-20,100){\mbox{$B=$}}
	\put(24,87){\mbox{$p_B$}}
	\color[rgb]{0.0,0.5,0.0} \linethickness{0.5mm} 
	\qbezier(20,80)(20,90)(20,100)

\end{picture}
\hspace{20mm}
\begin{picture}(90,130)(0,5)
	\qbezier(40,130)(30,115)(20,100)
	\qbezier(40,130)(50,115)(60,100)
	\qbezier(20,100)(40,100)(60,100)
	\qbezier(60,90)(75,67.5)(90,45)
	\qbezier(30,45)(60,45)(90,45)
	\qbezier(30,35)(20,20)(10,5)
	\qbezier(30,35)(40,20)(50,5)
	\qbezier(10,5)(30,5)(50,5)
	\put(40,130){\vertex}
	\put(38,138){\mbox{$0$}}
	\put(30,105){\mbox{$A|L_1$}}
	\put(20,10){\mbox{$A|L_2$}}
	\put(-20,100){\mbox{$A=$}}
	\put(35,75){\mbox{$p_A$}}
	\put(32,36){\mbox{$e$}}
	\color[rgb]{0.0,0.0,1.0} 
	\qbezier(20,35)(0,60)(18,90)
	\put(18,90){\vector(1,2){0}}
	\color[rgb]{0.0,0.5,0.0} \linethickness{0.5mm} 
	\qbezier(60,100)(60,95)(60,90)
	\qbezier(60,90)(45,67.5)(30,45)
	\qbezier(30,45)(30,40)(30,35)
\end{picture}
\end{center}
\caption{The construction in Lemma \ref{lem:rafl}.}
\label{fig:rafl}
\end{figure}

The rSPR-distance between a pair of trees is often used as a measure of the discrepancy between a pair of binary trees. So $M_r(\A)$ can be seen as generalisation of this notion from pairs of trees to arbitrary large sets of trees. 

\begin{ex}
\label{ex:spr_using_rspr}
Let $A$ and $B$ be the two caterpillar trees in Figure \ref{fig:rsprnotspr}, which differ only by a single SPR changing the location of the root. The other leaves in $A$ are ordered in the usual ($1,2,3, \ldots n$) order from the root, while in $B$ they are in the reverse order. We will show here that $M_r(A,B) \geq \frac{n-1}{2}$. By the rooted agreement forest lemma (Lemma \ref{lem:rafl}) this means $d_{rSPR}(A,B) \geq \frac{n-3}{2}$ and so 
	$$ D_{rSPR}(n) \geq \frac{n-3}{2}. $$ 
To show $M_r(A,B) \geq \frac{n-1}{2}$, notice that $A/\{ 0,a,b,c \}$ is never isomorphic to $B/\{ 0,a,b,c \}$ because the leaf neasrest to $0$ in $A/\{ 0,a,b,c \}$ is the smallest of $\{ a,b,c \}$ while the leaf nearest to $0$ in $B/\{ 0,a,b,c  \}$ is the largest. Therefore if $\{ L_1,L_2, \ldots ,L_m\}$ is a rooted agreement forest for $\{ A,B \}$, with $0 \in L_1$, then $|L_1| \leq 3$ and $|L_i| \leq 2$ for all $i>1$. So $n \leq 3+2(m-1)$ and therefore $m \geq \frac{n-1}{2}$.
\end{ex}

The next two lemmas provide an explicit construction to find an agreement forest for arbitrary $\A \subseteq \B(X)$.

\begin{lem}
\label{lem:cut}
Let $|X| = n+1$, let $T \in \B(X)$ be a binary tree with a root and let $a \in (1,2n]$ be a real number. It is possible to remove a single edge from $T$, so that the number of leaves in the component not containing the root is in the interval $\left[ \tfrac{a}{2} , a \right)$.
\end{lem}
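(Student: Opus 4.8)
The plan is to root $T$ at the leaf $0$ and orient every edge away from the root. For an edge $e$, I will write $w(e)$ for the number of leaves lying in the component of $T - e$ that does not contain the root; equivalently, $w(e)$ counts the leaves in the subtree hanging below $e$. The goal is then to exhibit a single edge with $w(e) \in [a/2, a)$. Two preliminary facts set up the boundary behaviour. The unique edge $e_0$ incident with the root cuts off all $n$ non-root leaves, so $w(e_0) = n$, and since $a \le 2n$ we have $w(e_0) = n \ge a/2$. At the other extreme, every edge incident with a non-root leaf has weight $1 < a$, using $a > 1$.

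The core of the argument is a greedy descent. Starting from $e_0$, I would trace a path of edges $e_0, e_1, e_2, \ldots$ by repeatedly passing from the current edge into the heavier of the two child edges below it, breaking ties arbitrarily, and stopping on reaching a leaf. Because $T$ is binary, any non-leaf vertex below $e_i$ has exactly two child edges whose weights sum to $w(e_i)$; hence the heavier child $e_{i+1}$ satisfies $w(e_{i+1}) \ge w(e_i)/2$, while the lighter child carries at least one leaf, so $w(e_{i+1}) \le w(e_i) - 1 < w(e_i)$. Along this path the weights therefore strictly decrease from $w(e_0) = n$ down to $1$, and crucially never drop by more than a factor of two in a single step.

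Finally I would run an intermediate-value argument on this decreasing sequence. Let $i$ be the least index with $w(e_i) < a$; such an index exists because the weights eventually reach $1 < a$. If $i = 0$, then $a/2 \le w(e_0) = n < a$, so $e_0$ is already the required edge. If $i \ge 1$, then $w(e_{i-1}) \ge a$ by minimality of $i$, so $w(e_i) \ge w(e_{i-1})/2 \ge a/2$, while $w(e_i) < a$ by the choice of $i$; thus $w(e_i) \in [a/2, a)$. In either case, removing the edge $e_i$ (or $e_0$) produces a component away from the root with the required number of leaves.

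I do not expect a genuine obstacle here; the proof is short, and the only points demanding care are the two boundary hypotheses and the halving estimate. The constraint $a \le 2n$ is precisely what forces the top weight $w(e_0) = n$ to be at least $a/2$, and $a > 1$ is what guarantees the descent overshoots strictly below $a$, since it bottoms out at a leaf of weight $1$. The single structural fact driving the conclusion is that a branch can at worst halve the weight, so the weight sequence cannot jump clear over the window $[a/2, a)$.
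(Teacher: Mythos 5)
Your proof is correct and is essentially the paper's own argument: the paper also descends from the root along a path that always passes to the heavier child (phrased via vertex weights $|\pi(v)|$ rather than edge weights), notes the sequence goes from $n \ge a/2$ down to $1 < a$ while never dropping by more than a factor of two, and extracts an edge whose weight lands in $[a/2,a)$. The only difference is cosmetic bookkeeping (weights on edges versus on vertices).
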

\begin{figure} 
\begin{center}
\setlength{\unitlength}{1pt} 
\begin{picture}(140,125)
	\qbezier(70,125)(70,115)(70,105)
	\qbezier(0,0)(35,52.5)(70,105)
	\qbezier(20,0)(15,7.5)(10,15)
	\qbezier(40,0)(65,37.5)(90,75)
	\qbezier(60,0)(65,7.5)(70,15)
	\qbezier(80,0)(70,15)(60,30)
	\qbezier(100,0)(105,7.5)(110,15)
	\qbezier(120,0)(100,30)(80,60)
	\qbezier(140,0)(105,52.5)(70,105)
	\put(0,0){\vertex}
	\put(20,0){\vertex}
	\put(40,0){\vertex}
	\put(60,0){\vertex}
	\put(80,0){\vertex}
	\put(100,0){\vertex}
	\put(120,0){\vertex}
	\put(140,0){\vertex}
	\put(70,125){\vertex}
	\put(75,105){\mbox{$v_0$}}
	\put(93,75){\mbox{$v_1$}}
	\put(68,60){\mbox{$v_2$}}
	\put(48,30){\mbox{$v_3$}}
	\put(73,15){\mbox{$v_4$}}
	\put(58,130){\mbox{$0$}}
\end{picture}
\end{center}
\caption{The construction in Lemma \ref{lem:cut} for $n=8$.}
\label{fig:cut}
\end{figure}
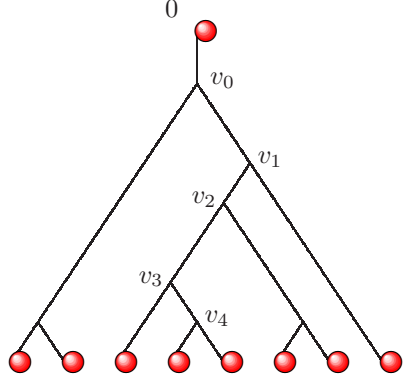
\begin{proof}
%Use rooted vocabulary: Leaf Descendants? Out Neighbours? Children?
Wlog let $0 \in X$ be the root. For each vertex $x \in T$ let $\pi(x)$ be the set of leaves $y$ such that $x$ is on the path between $0$ and $y$. For every non-leaf vertex $p$, 
	$$ \pi(p) = \pi(c_1) \sqcup \pi(c_2), $$
where $c_1,c_2 \in \Gamma(p)$ are the neighbours of $p$ which are not on the path from $p$ to the root. Moreover, if $v$ is a leaf then $|\pi(v)|=1$ (unless $v=0$, in which case $\pi(v)=X$). Therefore, for each tree-node $v_i$, there exists a neighbour $v_{i+1}$ such that 
	\begin{equation}
	\label{eq:half}
	|\pi(v_i)| > |\pi(v_{i+1})| \geq \tfrac{1}{2} |\pi(v_i)|.
	\end{equation}
Now construct a path $(v_0,v_1, \ldots ,v_m)$, such that $v_0$ is the root, $v_1$ is the neighbour of the root and equation~\eqref{eq:half} holds for each $i=1,2, \ldots ,m-1$. This sequence terminates when $v_m$ is a leaf. So the sequence decreases from 
	$$ |\pi(v_1)|=n \geq \frac{a}{2} \quad \mbox{ to } \quad |\pi(v_m)|=1 < a, $$ 
and never decreases by a factor less than $\frac{1}{2}$ in a single step. Hence there must be some $1 \leq j \leq m$ such that 
	$$ \frac{a}{2} \leq |\pi(v_j)| < a. $$
If edge $v_jv_{j-1}$ is deleted, then $\pi(v_j)$ is precisely the set of leaves in the component containing $v_j$.
\end{proof}
\begin{lem} 
\label{lem:divide_tree_equally}
For $|X| = n+1 \geq 2$, let $T \in \B(X)$ and let $a$ be a real number greater than $1$. It is possible to divide $T$ into disjoint subtrees such that each leaf lies in a subtree, each subtree contains less than $a$ leaves and at most one subtree contains less than $\frac{a}{2}$ leaves. 
\end{lem}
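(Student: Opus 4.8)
The plan is to prove this by induction on $n$ (the number of non-root leaves), peeling off one subtree at a time with Lemma \ref{lem:cut}. Equivalently, one may picture it as repeatedly applying Lemma \ref{lem:cut} to the component that still contains the root, stopping once that component has become small. I take the base case to be the inequality $n+1 < a$ rather than a fixed value of $n$: if the whole tree already has fewer than $a$ leaves, then the single subtree $T$ itself works, since it contains every leaf and has fewer than $a$ leaves, and even if it has fewer than $\frac{a}{2}$ leaves this is permitted, as it is the only subtree. In particular this base case also covers the trivial single-leaf tree (where $n+1 = 1 < a$), which is convenient for the recursion.

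For the inductive step, suppose $n + 1 \ge a$. Since $a > 1$ this forces $n \ge 1$, whence $a \le n + 1 \le 2n$, so $a \in (1, 2n]$ and Lemma \ref{lem:cut} applies to $T$ (which contains the root $0$). It lets me delete one edge so that the component $S$ not containing the root has a number of leaves $s \in [\frac{a}{2}, a)$; these are all non-root leaves, so the component $T'$ still containing the root has $n + 1 - s$ leaves. Because $\frac{a}{2} \le s < a \le n+1$ and $s$ is an integer, we have $1 \le s \le n$, so $T'$ has $n - s < n$ non-root leaves and the induction hypothesis (with the same value of $a$) applies to it.

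Finally I combine the pieces. By induction $T'$ splits into disjoint subtrees, each with fewer than $a$ leaves and at most one with fewer than $\frac{a}{2}$ leaves; adjoining $S$, which was separated from $T'$ by a single edge deletion, yields a division of $T$ into disjoint subtrees (disjointness is inherited because every piece arises from a succession of edge deletions). Every piece still has fewer than $a$ leaves, and since $s \ge \frac{a}{2}$ the piece $S$ is not undersized, so the number of pieces with fewer than $\frac{a}{2}$ leaves is unchanged, hence still at most one. This completes the induction.

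The argument is mostly bookkeeping, and I expect the only points needing care to be (i) verifying the hypothesis $a \le 2n$ of Lemma \ref{lem:cut} at each step, which reduces to $a \le n+1$ together with $n \ge 1$; and (ii) keeping the root leaf correctly in the leaf counts, so as to guarantee that the unique possibly-undersized piece is always the final leftover containing the root, never one of the pieces cut off by Lemma \ref{lem:cut} (each of which is pinned into the range $[\frac{a}{2}, a)$ and so cannot be small).
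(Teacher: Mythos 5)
Your proof is correct and follows essentially the same route as the paper: the paper also handles $a>n+1$ trivially and otherwise applies Lemma \ref{lem:cut} iteratively to peel off pieces with leaf-count in $[\tfrac{a}{2},a)$ until the root component is small. Your version just makes the induction and the verification of the hypothesis $a\in(1,2n]$ explicit, which the paper leaves implicit.
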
 
\begin{proof} (by induction)
The case $a>n+1$ is trivial. If $n+1 \geq a$, then we can use Lemma~\ref{lem:cut} iteratively to prune off subtrees with less than $a$ (but at least $\frac{a}{2}$) leaves until the remaining tree has less than $a$ leaves left (this remainder might have less than $\frac{a}{2}$ leaves). 
\end{proof}

\begin{lem}
\label{lem:ub_D_raf}
For $n>k \geq 2$, and any collection $\A = \{ A_1, \ldots ,A_k \} \subseteq \B(X)$, 
	$$ M_r(\A) < n - \frac{1}{2(k+1)} \left( \frac{n+1}{k+1} \right)^{\frac{1}{k}} + 1. $$
\end{lem}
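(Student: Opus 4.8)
The plan is to prove the bound constructively: build an explicit valid rooted agreement forest for $\A$ with few parts, and read off the estimate from the identity $\sum_j(|L_j|-1) = (n+1)-M_r(\A)$. A valid forest with $m$ parts thus ``saves'' $(n+1)-m$ leaves relative to the all-singleton partition, so it suffices to exhibit a valid forest that saves more than $\frac{1}{2(k+1)}\left(\frac{n+1}{k+1}\right)^{1/k}$. Two observations guide the construction. First, any part $L$ of size two automatically satisfies the isomorphism requirement $A_i/(L\cup\{0\}) \equiv A_j/(L\cup\{0\})$, since on the three labelled leaves consisting of the pair together with the root there is only one rooted shape; so for size-two parts the only real constraint is disjointness of the spanning subtrees. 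Second, the minimal spanning subtree $A|L$ has leaf set exactly $L$, so the genuine difficulty is forcing the spanning subtrees of the chosen parts to be pairwise disjoint in all $k$ trees \emph{at once}. (Note that I do \emph{not} expect to find one large common part of matching shape: trees such as a caterpillar and its reverse share no clade of size $\geq 2$, so the saving must come from many small parts.)

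Accordingly I would aim to produce a family of about $\frac{1}{2(k+1)}\left(\frac{n+1}{k+1}\right)^{1/k}$ leaf-pairs that are pairwise vertex-disjoint in $A_1,\dots,A_k$ simultaneously, taking all remaining leaves as singletons. I would build the family by a $k$-fold nested use of the division lemma (Lemma~\ref{lem:divide_tree_equally}), with scale parameters $a_1\ge\cdots\ge a_k$ chosen so that their product has the right order (it is this balancing that produces the exponent $1/k$ and the constant $\frac{1}{2(k+1)}$). First cut $A_1$ into clades of size about $a_1$: since each clade is pendant in $A_1$, pairs drawn from distinct clades are automatically $A_1$-disjoint. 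Inside one clade $C$ I then turn to $A_2$, cut $A_2$ into clades of a suitably \emph{larger} size, and apply pigeonhole: if the $A_2$-clades are large enough relative to the number of $A_1$-clades, then $C$ must deposit at least two of its leaves inside a single $A_2$-clade, yielding a pair clustered in both $A_1$ and $A_2$. Iterating this clustering through $A_3,\dots,A_k$ confines each chosen pair to a region that is small in every tree, and repeating the whole procedure over the top-level $A_1$-clades produces the required number of pairs.

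The hard part will be precisely this simultaneous disjointness. A clade of one tree need not be a clade, or even be clustered, in the other trees, because its leaves may be interspersed with outside leaves; so a pair that sits neatly inside a clade of $A_1$ may have a long, wandering spanning path in $A_2$, and pairs coming from different $A_1$-clades may collide in $A_2,\dots,A_k$. Controlling these cross-cluster collisions is the crux, and it is exactly what the pigeonhole step and the nesting are designed to absorb: by insisting at each stage that the next cut trap the pair inside a common clade of the next tree, and by choosing the $k$ scales so that the per-level losses balance (their product kept of order $\frac{n+1}{k+1}$), every chosen pair stays inside a region that is small in all $k$ trees, which bounds the collisions and lets me realise all the pairs disjointly. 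I would finish with the routine but delicate calculation showing that the balanced choice of the $a_i$ converts the $k$ nested losses into exactly the claimed saving of more than $\frac{1}{2(k+1)}\left(\frac{n+1}{k+1}\right)^{1/k}$, and hence $M_r(\A) < n - \frac{1}{2(k+1)}\left(\frac{n+1}{k+1}\right)^{1/k} + 1$.
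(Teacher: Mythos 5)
Your setup is right and matches the paper's: the forest consists of size-two parts plus singletons, and you correctly note that a part of size two automatically satisfies the isomorphism condition (there is only one rooted shape on three leaves), so the whole problem reduces to finding $\Omega\bigl(((n+1)/(k+1))^{1/k}\bigr)$ pairs whose spanning paths are pairwise disjoint in every tree simultaneously. But the mechanism you propose for producing those pairs has a genuine gap, and it is exactly at the point you yourself flag as ``the crux.'' Your nested pigeonhole guarantees that \emph{each} chosen pair is confined to a small region in every tree, but it does nothing to keep \emph{distinct} pairs apart in trees $A_2,\ldots,A_k$: two pairs coming from different top-level $A_1$-clades can perfectly well be funnelled into the \emph{same} $A_2$-clade, and then their spanning paths in $A_2$ may intersect (one pair's path can pass through the attachment point of the other's). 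Confinement to small regions does not bound collisions --- in the worst case all $t_1$ of your candidate pairs land in one $A_2$-clade and only one survives. Asserting that the nesting ``absorbs'' the collisions is not an argument; some additional idea is needed to certify that many of the candidate pairs can be kept.

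The paper supplies that idea with a maximality-plus-covering count rather than a direct construction. It cuts every tree at the \emph{same} scale into $t=\lfloor((n+1)/(k+1))^{1/k}\rfloor$ parts of size less than $a=2(n+1)/t$, assigns to each candidate a ``token'' $\tau\in[t]^k$ recording which part it occupies in each tree, calls a token good if the corresponding intersection of parts contains at least two labels, and calls two tokens compatible if they differ in \emph{every} coordinate --- this last condition is what actually forces disjointness of the spanning subtrees in all $k$ trees, since distinct parts of one tree are disjoint connected subtrees. It then takes a \emph{maximal} pairwise-compatible set $T$ of good tokens and counts: every label either is the unique label at its token-address (at most $t^k$ such labels) or lies in $U(\tau)$ for some $\tau\in T$ (fewer than $ka$ labels each, by maximality), whence $t^k+ka|T|>n+1$ and $|T|>\tfrac{1}{2(k+1)}((n+1)/(k+1))^{1/k}$. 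That greedy/maximal-set counting step is the missing ingredient in your proposal; without it (or an equivalent device for discarding colliding pairs while still counting enough survivors), the construction does not go through. As a secondary point, your intuition that the scales should be unequal and that the product of the sizes should be of order $(n+1)/(k+1)$ is also off --- in the paper it is the number of parts per tree, taken equal across trees, whose $k$th power is of order $(n+1)/(k+1)$.
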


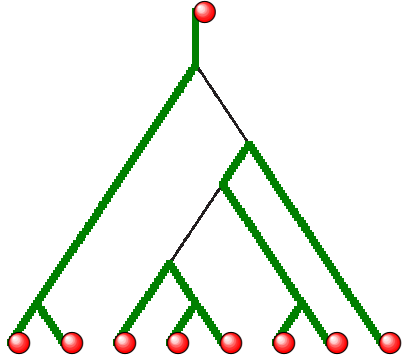
\begin{figure}
\begin{center}
\setlength{\unitlength}{1pt} 
\begin{picture}(140,125)
	\qbezier(40,0)(65,37.5)(90,75)
	\qbezier(140,0)(105,52.5)(70,105)
	\color[rgb]{0.0,0.5,0.0} \linethickness{0.8mm} 
	\qbezier(70,125)(70,115)(70,105)
	\qbezier(0,0)(35,52.5)(70,105)
	\qbezier(20,0)(15,7.5)(10,15)
	\qbezier(40,0)(50,15)(60,30)
	\qbezier(60,0)(65,7.5)(70,15)
	\qbezier(80,0)(70,15)(60,30)
	\qbezier(100,0)(105,7.5)(110,15)
	\qbezier(120,0)(100,30)(80,60)
	\qbezier(140,0)(115,37.5)(90,75)
	\qbezier(80,60)(85,67.5)(90,75)
	\put(0,0){\vertex}
	\put(20,0){\vertex}
	\put(40,0){\vertex}
	\put(60,0){\vertex}
	\put(80,0){\vertex}
	\put(100,0){\vertex}
	\put(120,0){\vertex}
	\put(140,0){\vertex}
	\put(70,125){\vertex}
\end{picture}
\end{center}
\caption{A subdivision of a tree into $3$ disjoint subtrees, each with $3$ leaves.}
\label{fig:divide_tree_equally}
\end{figure}

\begin{proof} (by construction)
Divide each $A_i$ into $t= \left\lfloor \sqrt[k]{\frac{n+1}{k+1}} \right\rfloor$ (possibly empty) disjoint connected parts, 
	$$A_i = A_{i,1} \sqcup A_{i,2} \sqcup \cdots \sqcup A_{i,t},$$ 
such that each part contains less than $\frac{2(n+1)}{t}$ leaves. This is possible by letting $a = \frac{2(n+1)}{t}$ in Lemma \ref{lem:divide_tree_equally}. Now, let us define a \emph{token} to be any map $\tau : [k] \rightarrow [t]$. For any token $\tau$, define 
	$$ I(\tau) := \bigcap_{i=1}^k L(A_{i,\tau(i)}) \qquad \mbox{and} \qquad U(\tau) := \bigcup_{i=1}^k L(A_{i,\tau(i)}). $$ 
We say that $\tau$ is \emph{good} if $I(\tau) \geq 2$, and we say a pair $\tau,\tau^\prime$ is \emph{compatible} if $\tau(i) \not= \tau^\prime(i)$ for all $i$. Now if $T$ is a set of pairwise compatible, good tokens, then we can construct an agreement forest of $n-|T|+1$ parts in the following manner:
\begin{itemize}
\item for each $\tau \in T$, construct a part of size $2$ using labels in $I(\tau)$,
\item all other labels are put in their own part of size $1$.
\end{itemize}
The paths between labels in each part of this partition will be disjoint because the tokens are compatible. %%%%% elabourate on this?
Hence this is a valid agreement forest. Now let $T$ be any maximal set of compatible good tokens (i.e. every good token that is not in $T$ is incompatible with a token in $T$). Then for each label $x \in X$ either $x \in I(\sigma)$ for some non-good token $\sigma$, or $x \in U(\tau)$ for some $\tau \in T$. So let us count:
	\begin{itemize}
	\item there are $n+1$ labels in total, 
	\item at most $t^k$ non-good tokens, each with at most $1$ label in $I(\sigma)$, and 
	\item less than $ka$ labels in $U(\tau)$ for each token $\tau \in T$.  
	\end{itemize}
Therefore $t^k + ka|T| > n+1$. Substituting $t = \left\lfloor \sqrt[k]{\tfrac{n+1}{k+1}} \right\rfloor$, we can compute 
	$$ |T| > \frac{n+1-t^k}{ka} \geq \frac{n+1 - \left( \tfrac{n+1}{k+1} \right)}{ka} = \frac{t}{2(k+1)}. $$
Since the above inequality is strict and $t,k$ are both integers, we can tighten the bound to:
	$$ |T| \geq \frac{t+1}{2(k+1)} > \frac{1}{2(k+1)} \sqrt[k]{\frac{n+1}{k+1}} $$
as required.
\end{proof}

Setting $k=2$ in Lemma~\ref{lem:ub_D_raf} and then applying the rooted agreement forest lemma~(\ref{lem:rafl}) gives 
	$$ D_{rSPR}(n) < n - \tfrac{1}{6 \sqrt{3}} \sqrt{n}, $$
which yields a suitable upper bound in Theorem~\ref{thm:extreme}.

\section{Upper Bound for the Expectation}

The upper bound in Theorem \ref{thm:expectation} is established in this section. In fact Lemma \ref{lem:ub_expect_raf} is stronger because the underlying unlabelled trees are not chosen randomly.

\begin{lem}
\label{lem:ub_expect_raf}
Let $k \geq 2$ be a fixed integer. Let $X = [n] \cup \{ 0 \}$ and let $\T$ be an arbitrary set of $k$ trees in $\B(X)$. If $\A$ is obtained by performing a permutation of the leaf-labels of each $T \in \T$ (the permutations are chosen independently and uniformly at random), then 
	$$ \E[M_r(\A)] \leq n - \Omega\left( n^{2/(k+1)} \right) $$
as $n \rightarrow \infty$.
\end{lem}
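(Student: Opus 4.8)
The plan is to adapt the token-counting argument of Lemma \ref{lem:ub_D_raf}, but to optimise the number of parts $t$ differently, since now the partition of each tree is fixed while the labellings are random. First I would fix, for each tree $T_i \in \T$, a subdivision into $t$ disjoint connected parts, each containing fewer than $\frac{2(n+1)}{t}$ leaves, using Lemma \ref{lem:divide_tree_equally} (here $t$ is a parameter to be chosen, roughly of order $n^{(k-1)/(k+1)}$ rather than $n^{1/k}$ as in the deterministic case). Because the labellings are now random, a \emph{token} $\tau \colon [k] \to [t]$ and its intersection set $I(\tau) = \bigcap_{i=1}^k L(A_{i,\tau(i)})$ behave randomly: $I(\tau)$ is the set of labels that land in part $\tau(i)$ of $A_i$ simultaneously for every $i$. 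The key quantity is the expected number of \emph{good} tokens (those with $|I(\tau)| \geq 2$), equivalently the expected number of pairs of labels that are co-located across all $k$ trees.

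The heart of the argument is a second-moment or direct expectation computation. For a fixed token $\tau$ and a fixed pair of labels $\{x,y\}$, the probability that both $x$ and $y$ fall into part $\tau(i)$ of $A_i$, for all $i$, is a product over $i$ of terms governed by the relative part-sizes; summing over all $\binom{n+1}{2}$ pairs and all $t^k$ tokens, I would show the expected number of good tokens is $\Omega(n^{2/(k+1)})$ for the appropriate choice of $t$. The intuition behind the exponent: a uniformly random partition into $t$ nearly-equal parts in each of $k$ trees creates co-located pairs with probability scaling like $t^{-(k-1)}$ per pair across the $k$ independent labellings (one tree's partition can be fixed as a reference), so the expected number of good tokens is on the order of $n^2 \cdot t^{-(k-1)} \cdot$ (token count normalisation); balancing this against the overhead $t^k$ from non-good tokens, as in the inequality $t^k + ka\lvert T \rvert > n+1$, forces the optimal $t \sim n^{(k-1)/(k+1)}$ and yields a gain of order $n^{2/(k+1)}$.

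Having a large expected supply of good tokens is not by itself enough: the construction in Lemma \ref{lem:ub_D_raf} needs a large set of \emph{pairwise compatible} good tokens, where $\tau, \tau'$ are compatible iff $\tau(i) \neq \tau'(i)$ for all $i$. So I would extract from the good tokens a large compatible subfamily. The cleanest route is to restrict attention to a structured family of tokens that are pairwise compatible by construction — for instance tokens of the form $\tau_s(i) = s + c_i \pmod{t}$ for a fixed offset vector $(c_i)$ and varying $s \in [t]$, which are automatically pairwise compatible — and to show that in expectation a positive fraction of these is good. This sidesteps any packing/matching loss. Alternatively, one argues that the good tokens form a $k$-partite-type hypergraph in which a compatible set is a system of distinct representatives, and that the maximal compatible set recovered greedily loses only a constant factor; since each good token contributes one part of size $2$, a set $T$ of compatible good tokens yields an agreement forest with $n - \lvert T \rvert + 1$ parts, giving $\E[M_r(\A)] \leq n + 1 - \E[\lvert T \rvert]$.

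The main obstacle I expect is precisely this passage from \emph{many} good tokens to \emph{many pairwise compatible} good tokens while keeping the $\Omega(n^{2/(k+1)})$ bound on the expectation; a naive union or Markov argument can lose the expected-value control, and the compatibility constraint couples the tokens in a way that the linearity-of-expectation computation does not see. Restricting to a pre-built pairwise-compatible family (the residue-class construction above) is the tactic I would rely on to keep the analysis to a single first-moment computation, deferring the more delicate balancing of $t$ against the non-good-token overhead to the final optimisation of the exponent.
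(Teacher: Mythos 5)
You have correctly identified the crux --- passing from \emph{many} good tokens to \emph{many pairwise compatible} good tokens in expectation --- but the device you propose for crossing it does not work, and your fallback is unsubstantiated. Consider your residue-class family $\{\tau_s\}_{s\in[t]}$: it contains only $t$ of the $t^k$ tokens. The expected total number of good tokens is at most the expected number of label pairs co-located in every tree, which is $\Theta(n^{2/(k+1)})$ when the part size is $\Theta(n^{(k-1)/(k+1)})$; by symmetry of the random labellings these good tokens are spread essentially uniformly over all $t^k$ tokens, so any pre-specified compatible family of size $t$ captures in expectation only a $t^{1-k}$ fraction of them. Concretely, $\P(\tau_s\mbox{ good})\le\binom{n+1}{2}\prod_i\P\big(x,y\in L(A_{i,\tau_s(i)})\big)=O\big(n^{(2-2k)/(k+1)}\big)$, so the family contributes $O\big(n^{(4-2k)/(k+1)}\big)$ good tokens in expectation: a constant for $k=2$ and $o(1)$ for $k\ge3$, nowhere near $\Omega(n^{2/(k+1)})$. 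A separate problem is that your parameters are inverted: you take the \emph{number} of parts to be about $n^{(k-1)/(k+1)}$, but pairwise compatible tokens must occupy distinct parts in each tree, so you could never exhibit more than that many of them; for $k=2$ this caps you at $n^{1/3}=o(n^{2/3})$. The correct choice is $\Theta(n^{2/(k+1)})$ parts each of size $\Theta(n^{(k-1)/(k+1)})$.

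The paper resolves the compatibility issue without tokens at all. It calls a good pair $\{x_1,x_2\}$ \emph{spoiled} if some other good pair lands in the same part as it in some tree, and \emph{super} if good but not spoiled; super pairs are pairwise disjoint and occupy distinct parts in every tree by definition, so they directly yield an agreement forest with $n+1-S$ parts. The whole argument is then one conditional first-moment estimate: given that $\{x_1,x_2\}$ is good, each of the $O(kb^2)$ candidate spoiling pairs inside its parts is good with probability $O((b/n)^{k-1})$, and $b$ is chosen exactly so that $kb^2(b/n)^{k-1}=O(1)$, i.e.\ $b^{k+1}=O(n^{k-1})$, making $\P(\mbox{spoiled}\mid\mbox{good})$ bounded away from $1$. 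That union bound is the missing idea. Your alternative claim that a greedy maximal compatible set ``loses only a constant factor'' is precisely what needs proof and is not obviously true: the maximality count from Lemma~\ref{lem:ub_D_raf} gives only $\Omega(n^{1/k})$ and becomes vacuous once $t^k>n+1$, which happens for every $t=\Omega(n^{2/(k+1)})$ with $k\ge2$.
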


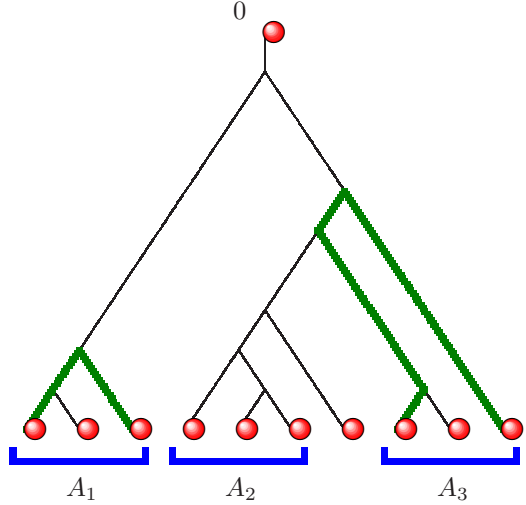
\begin{figure}
\begin{center}
\setlength{\unitlength}{1pt} 
\begin{picture}(140,170)(0,-15)
	\put(58,155){\mbox{$0$}}
	\put(-5,-25){\mbox{$A_1$}}
	\put(55,-25){\mbox{$A_2$}}
	\put(135,-25){\mbox{$A_3$}}
	\qbezier(70,150)(70,145)(70,135)
	\qbezier(-20,0)(25,67.5)(70,135)
	\qbezier(0,0)(-5,7.5)(-10,15)
	\qbezier(40,0)(70,45)(100,90)
	\qbezier(60,0)(65,7.5)(70,15)
	\qbezier(80,0)(70,15)(60,30)
	\qbezier(100,0)(85,22.5)(70,45)
	\qbezier(140,0)(115,37.5)(90,75)
	\qbezier(160,0)(115,67.5)(70,135)
	\color[rgb]{0.0,0.5,0.0} \linethickness{0.8mm} 
	\qbezier(-20,0)(-10,15)(0,30)
	\qbezier(20,0)(10,15)(0,30)
	\qbezier(120,0)(125,7.5)(130,15)
	\qbezier(130,15)(110,45)(90,75)
	\qbezier(160,0)(130,45)(100,90)
	\qbezier(90,75)(95,82.5)(100,90)
	\color[rgb]{0.0,0.0,1.0} \linethickness{0.8mm} 
	\qbezier(-25,-8)(-25,-10)(-25,-12)
	\qbezier(-25,-12)(0,-12)(25,-12)
	\qbezier(25,-8)(25,-10)(25,-12)
	\qbezier(35,-8)(35,-10)(35,-12)
	\qbezier(35,-12)(60,-12)(85,-12)
	\qbezier(85,-8)(85,-10)(85,-12)
	\qbezier(115,-8)(115,-10)(115,-12)
	\qbezier(115,-12)(140,-12)(165,-12)
	\qbezier(165,-8)(165,-10)(165,-12)
	\put(-20,0){\vertex}
	\put(0,0){\vertex}
	\put(20,0){\vertex}
	\put(40,0){\vertex}
	\put(60,0){\vertex}
	\put(80,0){\vertex}
	\put(100,0){\vertex}
	\put(120,0){\vertex}
	\put(140,0){\vertex}
	\put(160,0){\vertex}
	\put(70,150){\vertex}
\end{picture}
\end{center}
\caption{Super pairs lie in differed subtrees $A_i$.}
\label{fig:superPairs} 
\end{figure}

\begin{proof}
Set $b = \left\lfloor \sqrt[k+1]{ \frac{kn^{k-1}}{2} } \right\rfloor$ and $s = \left\lfloor \frac{n+1}{2b} \right\rfloor$. Now for each $T \in \T$, let $T_1, T_2, \ldots ,T_s$ be disjoint subtrees of $T$ such that the number of leaves in each $T_i$ is exactly $b$. This is possible by Lemma~\ref{lem:divide_tree_equally} (set $a=2b$) and taking subtrees as necessary. Let $A \in \A$ be the tree obtained by the random labelling of $T$, and let $A_1,A_2, \ldots ,A_s$ be the subtrees of $A$ corresponding to $T_1,T_2, \ldots ,T_s$ respectively.
Call $\{ x_1, \ldots ,x_j \} \subseteq X$ a \emph{good} set if for each $A \in \A$, the leaves labelled $x_1, \ldots ,x_j$ all lie in the same $A_i$. We say that a good pair of distinct labels $\{ x_1,x_2 \}$ is \emph{spoiled} if for some $A \in \A$, there is another good pair $\{ x_3,x_4 \}$ in the same subtree $A_i$ of $A$ as $x_1$ and $x_2$. We call a pair \emph{super} if it is good but not spoiled. We have 
	$$ \P\big(\{ x_1,x_2 \} \mbox{ is good} \big) = \left( \frac{sb}{n+1} \times \frac{b-1}{n} \right)^k $$ 
because for any $A \in \A$ the probability that $x_1 \in \bigcup_{i=1}^s L(A_i)$ is $\frac{sb}{n+1}$, and given that $x_1 \in L(A_i)$ the probability that $x_2 \in L(A_i)$ is $\frac{b-1}{n}$. Similarly: 
	$$ \P\big( \{ x_1,x_2,x_3 \} \mbox{ is good}\big) = \left( \frac{sb}{n+1} \times \frac{b-1}{n} \times \frac{b-2}{n-1} \right)^k. $$
Since $\{ x_1,x_2,x_3 \}$ being good if and only if $\{ x_1,x_2 \}$ and $\{ x_1,x_3 \}$ are both good, we can see that 
	\begin{equation} 
	\P\big( \{ x_1,x_3 \} \mbox{ good} \big| \{ x_1,x_2 \} \mbox{ good} \big) = \left( \frac{b-2}{n-1} \right)^k \leq \left( \frac{b}{n} \right)^k.
	\label{eq:bound1}
	\end{equation} 
Now fix a good pair $\{ x_1,x_2 \}$ with $\{ x_1,x_2 \} \subseteq L(A_i)$ for some $A \in \A$. If $x_3$ and $x_4$ are chosen from $L(A_i) \setminus \{ x_1,x_2 \}$, we compute an upper bound on the probability that $\{ x_3,x_4 \}$ is a good pair. For each $A^\prime \in \A \setminus A$, the probability that $x_3 \in \bigcup_{j=1}^s L(A_j^\prime )$ is $\frac{sb-2}{n-1}$ and given that $x_3 \in L(A_j^\prime )$, the probability that $x_4 \in L(A_j^\prime )$ is at most $\frac{b-1}{n-2}$. Therefore 
	\begin{equation}
	\P\big( \{ x_3,x_4 \} \mbox{ good} \big| \{ x_1,x_2 \} \mbox{ good} \big) \leq \left( \frac{sb-2}{n-1} \times \frac{b-1}{n-2} \right)^{k-1} \leq \left( \frac{b}{n} \right)^{k-1}. 
	\label{eq:bound2}
	\end{equation}
Now let $Y$ denote the set of all pairs $\{ y_1,y_2 \}$ distinct from $\{ x_1,x_2 \}$, which lie in the same $A_i$ as $\{ x_1,x_2 \}$ for some $A \in \A$. Using the union bound, the probability that $\{ x_1,x_2 \}$ is super is bounded below by 
	\begin{equation} 
	\P\Big( \{x_1,x_2 \} \mbox{ good} \Big) \left[ 1 - \sum_{\{ y_1,y_2 \} \in Y} \P\Big( \{ y_1,y_2 \} \mbox{ good} \Big| \{ x_1,x_2 \} \mbox{ good} \Big) \right] 
	\label{eq:union_bound}
	\end{equation}
where the sum ranges over all $\{ y_1,y_2 \} \in Y$. There are at most $k\binom{b-2}{2}$ such pairs with $\{ x_1,x_2 \} \cap \{ y_1,y_2 \} = \emptyset$, and at most $2k(b-2)$ such pairs with $|\{ x_1,x_2 \} \cap \{ y_1,y_2 \}| = 1$. Substituting Equations (\ref{eq:bound1}) and (\ref{eq:bound2}) into (\ref{eq:union_bound}) yields the lower bound 
	$$ \left( \frac{sb}{n+1} \times \frac{b-1}{n} \right)^k \left[ 1 - k\binom{b-2}{2} \times \left( \frac{b}{n} \right)^{k-1} - 2k(b-2) \times \left( \frac{b}{n} \right)^k \right]. $$
The values $b = \Theta\left(n^{\frac{k-1}{k+1}}\right)$ and $s = \Theta\left( n^{\frac{2}{k+1}} \right)$ were chosen so that the factors in the above expression are positive and so this bound is $\Omega\big( n^{-2k/(k+1)} \big)$. Therefore, if $S$ is the number of super pairs, then 
	$$ \E[S] = \binom{n+1}{2} \P\big( \{ x_1,x_2 \} \mbox{ super} \big) = \Omega\left( n^{2/(k+1)}\right). $$
Now let us partition the labels such that each super pair forms a part of size two (super pairs are disjoint by definition), and all other labels are in individual parts. This partition is a valid agreement forest, because if $\boldsymbol{x}$ and $\boldsymbol{y}$ are any super pairs, then for any $A \in \A$ we know that $A|\boldsymbol{x}$ is contained within $A_i$ and $A|\boldsymbol{y}$ is contained within $A_j$ for some $i \not= j$. Hence $M_r(\A) \leq n+1-S$ and so 
	$$ \E[M_r(\A)] \leq n+1-\E[S] = n - \Omega\big( n^{2/(k+1)} \big). \eqno\qedhere$$
\end{proof}
If we set $k=2$ in Lemma~\ref{lem:ub_expect_raf} and apply the rooted agreement forest lemma (Lemma~\ref{lem:rafl}) we see that 
	$$ \E[d_{rSPR}(A,B)] \leq n - \Omega\big( n^{2/3} \big). $$
If $\T = (A,B)$ is chosen uniformly at random, then a random relabelling of the leaves would not affect the distribution, so the last inequality gives the upper bound in Theorem~\ref{thm:expectation}.

\section{Lower Bound for the Radius}

The lower bound in Theorem \ref{thm:extreme} is established in Lemma~\ref{lem:lb_R_uaf} at the end of this section.
The proofs in this section use a notion similar to the rooted agreement forest from the previous section; Definition \ref{def:uaf} and Lemma \ref{lem:uafl} below are analogous to Definition \ref{def:raf} and Lemma \ref{lem:rafl}, for $TBR$s instead of $rSPR$s. 

\begin{defn}
\label{def:uaf}
Let $\A \subset \B(X)$ be a set of $k \geq 2$ trees. An \emph{unrooted agreement forest} of $\A$ is a partition $L_1, \ldots , L_m$ of $X$, such that for all $A,B \in \A$:
	\begin{itemize}
	\item The subtrees $A|L_1$, $A|L_2$, $\ldots$ , $A|L_m$ are disjoint, and 
	\item $A/L_j \equiv B/L_j$ for $j = 1, \ldots m$.
	\end{itemize}
Let $M(\A)$ be the minimal possible value for $m$; the minimal number of parts in an unrooted agreement forest of $\A$. When $k=2$, we write $M(A,B)$ for $M(\{ A,B \})$.
\end{defn}

\begin{figure}

\begin{center}
\setlength{\unitlength}{1pt} 
$$ \A = \left\{ 
\begin{picture}(80,40)(-10,20)
	\qbezier(30,60)(30,50)(30,45)
	\qbezier(0,0)(15,22.5)(30,45)
	\qbezier(20,0)(15,7.5)(10,15)
	\qbezier(40,0)(30,15)(20,30)
	\qbezier(60,0)(45,22.5)(30,45)
	\put(0,0){\vertex}
	\put(20,0){\vertex}
	\put(40,0){\vertex}
	\put(60,0){\vertex}
	\put(30,60){\vertex}
	\put(28,67){\mbox{$0$}}
	\put(0,-13){\mbox{$1$}}
	\put(20,-13){\mbox{$2$}}
	\put(40,-13){\mbox{$3$}}
	\put(60,-13){\mbox{$4$}}
\end{picture}
\begin{array}{c} $\;$ \\ $\;$ \\ $\;$ \\ $\;$ \\ , \end{array} 
\begin{picture}(80,40)(-10,20)
	\qbezier(30,60)(30,50)(30,45)
	\qbezier(0,0)(15,22.5)(30,45)
	\qbezier(20,0)(30,15)(40,30)
	\qbezier(40,0)(45,7.5)(50,15)
	\qbezier(60,0)(45,22.5)(30,45)
	\put(0,0){\vertex}
	\put(20,0){\vertex}
	\put(40,0){\vertex}
	\put(60,0){\vertex}
	\put(30,60){\vertex}
	\put(28,67){\mbox{$0$}}
	\put(0,-13){\mbox{$1$}}
	\put(20,-13){\mbox{$2$}}
	\put(40,-13){\mbox{$3$}}
	\put(60,-13){\mbox{$4$}}
\end{picture}
\right\} $$

\vspace{5mm}

\setlength{\unitlength}{0.7pt} 
\begin{picture}(60,50)
\put(0,0){
	\qbezier(0,0)(10,10)(20,20)
	\qbezier(20,0)(20,10)(20,20)
	\qbezier(40,0)(40,10)(40,20)
	\qbezier(60,0)(50,10)(40,20)
	\qbezier(20,20)(30,20)(40,20)
	\put(0,0){\vertex}
	\put(20,0){\vertex}
	\put(40,0){\vertex}
	\put(60,0){\vertex}
	\put(30,50){\vertex}
	\put(28,60){\mbox{\tiny $0$}}
	\put(0,-14){\mbox{\tiny $1$}}
	\put(20,-14){\mbox{\tiny $2$}}
	\put(40,-14){\mbox{\tiny $3$}}
	\put(60,-14){\mbox{\tiny $4$}}
}
\end{picture}
\end{center}
\caption{An example of an unrooted agreement forest.}
\label{fig:uafDefinition} 
\end{figure}
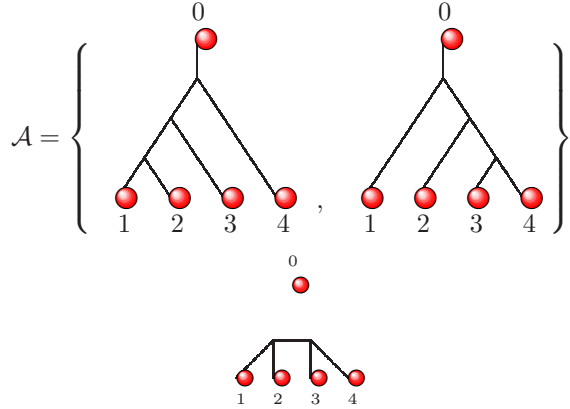

\begin{lem}[Unrooted Agreement Forest Lemma]
\label{lem:uafl}
	If $A$ and $B$ are any two trees in $\B(X)$, then 
	$$ M(A,B) = d_{TBR}(A,B) + 1. $$
\end{lem}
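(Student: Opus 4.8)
The plan is to mirror the proof of the Rooted Agreement Forest Lemma (Lemma~\ref{lem:rafl}), establishing the two inequalities $M(A,B) \leq d_{TBR}(A,B)+1$ and $d_{TBR}(A,B)+1 \leq M(A,B)$ separately, but adapting each direction to the unrooted setting where we compare $A/L_j$ with $B/L_j$ rather than $A/(L_j \cup \{0\})$ with $B/(L_j \cup \{0\})$, and where the moves are TBRs rather than rSPRs.

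For the upper bound $M(A,B) \leq d_{TBR}(A,B)+1$, I would take a shortest sequence $A = A_0, A_1, \ldots, A_d = B$ with $(A_{i-1},A_i) \in \TBR$ and $d = d_{TBR}(A,B)$, and extract an agreement forest from the bisection edges. Concretely, each TBR move bisects a tree by deleting one edge; deleting the corresponding $d$ edges from $A$ (the ``cut set'') partitions the leaves into at most $d+1$ blocks $L_1, \ldots, L_m$. The point is that performing only the bisections (and none of the reconnections) along the whole sequence yields the same leaf-partition whether we read it off from $A$ or from $B$, since each TBR preserves the induced subtree on each block up to isomorphism. Hence this partition is a valid unrooted agreement forest, giving $M(A,B) \leq d+1$.

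For the lower bound $d_{TBR}(A,B)+1 \leq M(A,B)$, I would argue by induction on $m := M(A,B)$, exactly paralleling the rSPR argument. The base case $m=1$ forces $A \equiv B$, so $d_{TBR}(A,B)=0$. For the inductive step with $m \geq 2$, it suffices to produce some $A'$ with $(A,A') \in \TBR$ and $M(A',B) \leq m-1$. Given a minimal unrooted agreement forest $F = \{L_1,\ldots,L_m\}$, I would select two parts, say $L_1$ and $L_2$, whose induced subtrees $B|L_1$ and $B|L_2$ are joined in $B$ by a path $p_B$ meeting no other $B|L_i$; this is possible since $B$ is connected and the $B|L_i$ are disjoint. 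A single TBR on $A$ then suffices to move $A|L_2$ into the position that merges $L_1$ and $L_2$ so that $A'/(L_1\cup L_2) \equiv B/(L_1 \cup L_2)$, after which $\{L_1 \cup L_2, L_3, \ldots, L_m\}$ is an unrooted agreement forest for $(A',B)$ with $m-1$ parts.

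\textbf{The main obstacle} I expect is the inductive step, and specifically the point where the unrooted case is genuinely easier than its rooted cousin but requires care to justify. In the rSPR proof, the merge of $L_1$ (containing the root) and $L_2$ must be realized by a single \emph{orientation-preserving} prune-and-regraft, which is why that argument tracks the edge $e = yx$ and the direction toward the root. For TBR we have strictly more freedom: a TBR may detach $A|L_2$ entirely and reattach it anywhere, with no root constraint and no requirement that a pruned subtree stay attached at one point. I would need to verify that one TBR bisection-plus-reconnection indeed realizes the target merge, i.e.\ that detaching the subtree containing exactly the leaves of $L_2$ (cutting along the edge where $p_A$ leaves $A|L_2$) and reconnecting at the internal edge of $A|L_1$ corresponding to where $p_B$ joins $B|L_1$ produces $A'$ with $A'/(L_1\cup L_2)\equiv B/(L_1\cup L_2)$ while leaving every other $A'|L_i$ unchanged. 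The subtlety is confirming the resulting graph is still a valid binary tree in $\B(X)$ (handled by the closure remarks after Definition~\ref{def:tbr}) and that the bisection cleanly isolates $L_2$; since any rSPR is a TBR, the rooted construction can in fact be reused verbatim as a valid TBR, so I would likely point to Lemma~\ref{lem:rafl}'s construction as already furnishing the needed move, noting only that here no orientation constraint needs checking.
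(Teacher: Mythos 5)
Your overall architecture matches the paper's: the same two inequalities, the same extraction of a forest from the bisection edges for the upper bound, and the same induction on $m = M(A,B)$ with a single TBR merging two parts for the lower bound. The genuine gap is in the inductive step, at exactly the point you flag as the main obstacle and then resolve incorrectly. A TBR's reconnecting edge has \emph{two} endpoints, and you only ever specify one of them (the midpoint of the edge of $A|L_1$ corresponding to where $p_B$ joins $B|L_1$). Your fallback --- that ``the rooted construction can be reused verbatim as a valid TBR'' --- implicitly places the other endpoint where the piece was detached, i.e.\ at the vertex $y$ where $p_A$ leaves $A|L_2$. That is correct in Lemma~\ref{lem:rafl} only because a rooted agreement forest demands $A/(L_2 \cup \{0\}) \equiv B/(L_2\cup\{0\})$: adjoining the root to every part forces the point where the path from the root enters $A|L_2$ to correspond, under that isomorphism, to the point where it enters $B|L_2$, so reattaching at $y$ automatically lands in the right place. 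An unrooted agreement forest only guarantees $A/L_2 \equiv B/L_2$; the point where $p_A$ enters $A|L_2$ and the point where $p_B$ enters $B|L_2$ need not correspond, and reattaching at $y$ will in general give $A'/(L_1\cup L_2)\not\equiv B/(L_1\cup L_2)$. So the unrooted case is not ``easier'' here; this is the one place where the argument must genuinely change.

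The repair is precisely the extra freedom of TBR that you mention but do not use: after bisecting, place the second endpoint of the reconnecting edge at the midpoint of the edge (or path) of $A|L_2$ that corresponds, under $A/L_2 \equiv B/L_2$, to the point where $p_B$ meets $B|L_2$ (with the obvious convention when $L_2$ is a singleton). This is what Steps 3--4 of the paper's proof do: they fix an arbitrary bisection edge outside all the $A|L_i$, read off from $p_B$ a consecutive pair of parts straddling the two components, and then map \emph{both} attachment points of $p_B$ back into $A$ via the part isomorphisms before reconnecting. Your alternative ordering (choose $L_1,L_2$ adjacent in $B$ first, then cut the first edge of $p_A$ leaving $A|L_2$) is also viable, but only once both reconnection endpoints are specified in this way. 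By contrast, your worry about the bisection ``cleanly isolating $L_2$'' is a non-issue: the component containing $A|L_2$ may well contain other parts, and that is harmless since they travel rigidly and their induced subtrees are unchanged.
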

\begin{proof} 
Let $A=A_0,A_1, \ldots A_d=B$, be a sequence satisfying $(A_{i-1},A_i) \in \TBR$ for each $i \in [d]$. If we perform all the \emph{bisections} of these TBR moves (and none of the \emph{reconnections}), then the result will be an agreement forest of at most $d+1$ parts. Therefore 
	$$ M(A_0,A_m) \leq d+1 = d_{TBR}(A_0,A_m) + 1.$$
Now we show $m := M(A,B) \geq d_{TBR}(A,B)+1$, by induction on $m$. For the base case $m=1$, any agreement forest with one component must be $A=B$ itself, thence $d_{TBR}(A,B)=0$. For the inductive step $m \geq 2$, it suffices to show that there exists some $A^\prime$ with $(A,A^\prime) \in \TBR$, such that $M(A^\prime ,B) \leq m-1$. Let $F = \{ L_1, \ldots, L_m \}$ be an agreement forest for $(A,B)$. We can construct $A^\prime$ from $F$ explicitly:
\begin{description}
\item[Step 1:] Choose $e$, the bisection edge, arbitrarily from the edges not in $A|L_i$ for any $i$. We know that such an edge exists because $m \geq 2$.
\end{description}
The deletion of $e$ forms two non-empty components $A_1$ and $A_2$. Note that each of the parts $A|L_i$ lie completely within one of $A_1$ or $A_2$. 
\begin{description}
\item[Step 2:] Now arbitrarily choose a pair of parts $L_x$ and $L_y$, such that $A|L_x$ lies in $A_1$ and $A|L_y$ lies in $A_2$. Let $p_B$ be the path in $B$ from a vertex in $B|L_x$ to a vertex in $B|L_y$.
\item[Step 3:] Let $v_1$ be the last vertex on $p_B$ which lies in a part $B|L_1$ (not necessarily distinct from $B|L_x$) such that $A|L_1$ lies in $A_1$. Let $v_2$ be first vertex after $v_1$ in $p_B$ which lies in a part $B|L_2$ (not necessarily distinct from $B|L_y$). By definition, $A|L_2$ must lie in $A_2$ (see Figure \ref{fig:uafLemma}). 
\end{description}

So the path between $v_1$ and $v_2$ has no vertices in any $B|L_i$ except for its endpoints. Now $v_1$ corresponds to the midpoint of an edge in $B/L_1 \cong A/L_1$, which in turn corresponds to a path $p_1$ in $A_1$ (or else $x=1$ and $L_1$ is a singleton). Similarly $v_2$ corresponds to a $p_2$ in $A_2$ (or $L_2 = \{ v_2 \}$). 
\begin{description}
\item[Step 4:] Construct $A^\prime$ to be the reconnection of $A_1$ and $A_2$ by an edge between the midpoint of an edge in $p_1$ and the midpoint of an edge in $p_2$ (if $L_1$ or $L_2$ is a singleton, then simply use its vertex as the endpoint of the reconnecting edge). 
\end{description}
By construction, $(A,A^\prime) \in \TBR$ because we constructed $A^\prime$ by performing a TBR move on $A$. Moreover $A^\prime / (L_1 \cup L_2) \equiv B/(L_1 \cup L_2)$, so
	$$ F^\prime = \big\{ (L_1 \cup L_2) , L_3, \ldots ,L_m \big\} $$
is an agreement forest for $(A^\prime ,B)$, of size $m-1$. 
\end{proof}

We now present a result concerning permutations (Lemma~\ref{lem:orderings}) and a property of $\A$ that guarantees large $M(\A)$ (Lemma~\ref{lem:deleting_edges}). These two Lemmas are the two crucial steps required for the proof of the main result of this section (Lemma~\ref{lem:lb_R_uaf}). 

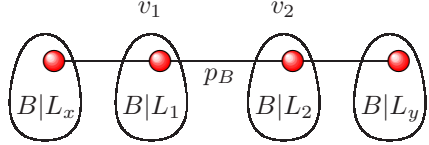
\begin{figure}
\setlength{\unitlength}{1pt} 
\begin{center}
\begin{picture}(130,50)(-10,0)
\qbezier(0,30)(65,30)(130,30)
\put(0,0){
	\qbezier(0,40)(-10,40)(-13,20)
	\qbezier(0,40)(10,40)(13,20)
	\qbezier(0,0)(-16,0)(-13,20)
	\qbezier(0,0)(16,0)(13,20)
	\put(-11,10){\mbox{$B|L_x$}}
	\put(0,30){\vertex}
}
\put(40,0){
	\qbezier(0,40)(-10,40)(-13,20)
	\qbezier(0,40)(10,40)(13,20)
	\qbezier(0,0)(-16,0)(-13,20)
	\qbezier(0,0)(16,0)(13,20)
	\put(-11,10){\mbox{$B|L_1$}}
	\put(0,30){\vertex}
}
\put(90,0){
	\qbezier(0,40)(-10,40)(-13,20)
	\qbezier(0,40)(10,40)(13,20)
	\qbezier(0,0)(-16,0)(-13,20)
	\qbezier(0,0)(16,0)(13,20)
	\put(-11,10){\mbox{$B|L_2$}}
	\put(0,30){\vertex}
}
\put(130,0){
	\qbezier(0,40)(-10,40)(-13,20)
	\qbezier(0,40)(10,40)(13,20)
	\qbezier(0,0)(-16,0)(-13,20)
	\qbezier(0,0)(16,0)(13,20)
	\put(-11,10){\mbox{$B|L_y$}}
	\put(0,30){\vertex}
}
\put(60,23){\mbox{$p_B$}}
\put(35,48){\mbox{$v_1$}}
\put(85,48){\mbox{$v_2$}}
\end{picture}
\end{center}
\caption{The construction in Lemma~\ref{lem:uafl}.}
\label{fig:uafLemma}
\end{figure}

\begin{lem}
\label{lem:orderings}
For any given integers $n,b,k>0$ such that $2 \leq b \leq \sqrt[k]{n+1}$, let $X = \{ 0,1, \ldots ,n \}$. There exist $k$ permutations $\{ \phi_j : X \rightarrow X \}_{j=1}^k$ such that for any distinct $x,y \in X$ there exists some $i$ such that 
	$$ |\phi_i(x)-\phi_i(y)| > b^{k-1} - 2b^{k-3}. $$
\end{lem}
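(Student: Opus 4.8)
The plan is to recast the statement as a \emph{spreading} property of base-$b$ digit representations. Since $b^{k}\le n+1$, I would first assign to each label a distinct $k$-digit string; to account for all $n+1$ labels I would in fact use $k$ radices, each at least $b$, whose product is at least $n+1$, so that every label receives a distinct mixed-radix address and each top-level block still contains at least $b^{k-1}$ labels. (When $b=\lfloor (n+1)^{1/k}\rfloor$ one has $n+1=O(b^{k})$, so only a bounded inflation of the radices is needed, and I would keep them as close to $b$ as possible.) The permutation $\phi_i$ is then the linear order that gives top priority to digit $i$, using the remaining digits, in a fixed cyclic order, as lower-significance tie-breakers. Writing $\delta_d$ for the signed difference of the $d$-th digits of $x$ and $y$, one gets $\phi_i(x)-\phi_i(y)=\sum_{l=0}^{k-1}\delta_{(i+l)\bmod k}\,b^{\,k-1-l}$, so the whole problem reduces to: for every nonzero difference vector $(\delta_0,\dots,\delta_{k-1})$, some cyclic rotation has absolute value exceeding $b^{k-1}-2b^{k-3}$.

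The point is then to choose, for each pair, a good rotation. Let $D=\{d:\delta_d\neq 0\}\neq\varnothing$. If some $|\delta_d|\ge 2$, prioritising that digit already yields at least $2b^{k-1}-(b^{k-1}-1)>b^{k-1}$, so I may assume every nonzero digit equals $\pm 1$. I would then split on $|D|$ via a pigeonhole over the $k$ cyclic slots. In the \emph{sparse} regime $|D|<k/2$ some differing digit is followed, in priority order, by at least two non-differing digits; prioritising it places weight $b^{k-1}$ on a difference of size $1$ while every other differing digit carries weight at most $b^{k-4}$, so the cancellation is at most $b^{k-3}-1<2b^{k-3}$ and the separation beats $b^{k-1}-2b^{k-3}$. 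The delicate regime is $|D|\ge k/2$, where two differing digits must occupy adjacent slots and can, for the wrong sign pattern, nearly cancel: indeed for $k=2$, $b=2$ the difference vector $(1,-1)$ shows that \emph{both} naive orderings give separation exactly $1$, which fails the (strict) bound. This is what forces the orderings to be built with care.

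The fix, and the crux of the argument, is to orient the lower-significance digits of each $\phi_i$ (ascending or descending, Gray-code style) so that the two leading weights $b^{k-1}$ and $b^{k-2}$ add with a common sign for the chosen prioritisation; then the only uncontrolled contribution comes from slots of weight at most $b^{k-3}$, again bounded by $2b^{k-3}$. The main obstacle is that whether an adjacent pair of digits reinforces or cancels depends on the \emph{pair}, whereas the orientation of each $\phi_i$ must be fixed \emph{in advance}; so I must design the $k$ orientation patterns collectively so that, for \emph{every} sign pattern on $D$, at least one $\phi_i$ prioritising a differing digit has its top two contributions reinforcing. The $k=2$ solution (opposite secondary orientations for $\phi_1$ and $\phi_2$) is the model, and the slack term $2b^{k-3}$ is precisely the price paid for the one digit two levels below the top, whose reflection is governed by the leading digit and hence cannot be made to reinforce.

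I expect the bulk of the work to be this uniform orientation design together with the $|D|\ge k/2$ bookkeeping; the sparse case, the $|\delta_d|\ge 2$ reduction, and the radix padding for $n+1>b^{k}$ are routine by comparison. A final sanity check on the small cases $k\le 2$ (where the bound is near-trivial but the sign obstruction is already visible) would confirm that the constant $2$ in $2b^{k-3}$ is the right one.
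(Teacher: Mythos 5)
Your reduction to difference vectors of digits and your diagnosis of the cancellation problem are both on target: the example $(1,-1)$ with $k=b=2$ really does defeat the naive cyclic-rotation orderings, and the bound $b^{k-1}-2b^{k-3}$ is tight enough that the second-priority slot must be controlled. But the proposal stops precisely at the step that constitutes the lemma. You never exhibit the $k$ orientation patterns; you only state the requirement they must satisfy (``for every sign pattern on $D$, at least one $\phi_i$ prioritising a differing digit has its top two contributions reinforcing'') and explicitly defer its verification as ``the bulk of the work''. For $k=2$ the opposite-orientation trick does work, but for general $k$ the existence of $k$ orderings meeting your reinforcement condition for all $3^k-1$ difference vectors is itself a nontrivial covering-design problem (already for $k=3$ and all-differing vectors it becomes a condition on which of the products $\delta_i\delta_{i+1}$ your chosen signs can hit), and without that construction the argument is a plan rather than a proof. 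A secondary loose end: once you inflate the radices to accommodate all $n+1$ labels, the cancelling lower-order weights inflate along with the leading weight $b^{k-1}$, and the inequality needs rechecking; the paper avoids this by letting only the leading digit overflow and noting that enlarging $n$ can only push more labels between $\phi_i(x)$ and $\phi_i(y)$.

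The paper closes your gap with a simpler device than per-slot orientations. It takes $\phi_1$ to be the identity and, for $j\ge 2$, sorts primarily by digit $j$ in \emph{decreasing} order with ties broken by the natural value of $x$, so the priority order of $\phi_j$ is digit $j$ reversed, then digits $1,2,\dots$ in their usual significance. The selection rule is: assume $x_1\ge y_1$ without loss of generality; if every digit of $x$ is at least the corresponding digit of $y$, use $\phi_1$; otherwise use $\phi_i$ for the \emph{first} index $i$ with $x_i<y_i$. The reversal makes the leading term $-d_ib^{k-1}$ equal to $+b^{k-1}$, every earlier digit (including the most significant tie-breaker) contributes nonnegatively, and only digits after position $i\ge 2$ can cancel, costing at most $b^{k-3}+\dots+1<2b^{k-3}$; the case $|d_i|\ge 2$ is handled separately by counting the at least $b^{k-1}$ labels $z$ with $z_i=\min(x_i,y_i)+1$ that must lie between $\phi_i(x)$ and $\phi_i(y)$. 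This ``first negative digit'' rule is exactly what replaces your missing collective orientation design; either prove your covering property or adopt the reversal-plus-first-descent construction.
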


\begin{proof} (by construction) 
Represent each $x \in X$ by the $k$-tuple $(x_1,x_2, \ldots ,x_k)$ of non-negative integers such that 
	\begin{itemize}
	\item $x = x_1b^{k-1} + x_2b^{k-2} + \cdots + x_k$, and 
	\item $0 \leq x_i < b$ for all $1 < i \leq k$. 
	\end{itemize}
If $x < b^k$, then this is exactly representing $x$ in base $b$. For larger $x$, we allow the coefficient of $b^{k-1}$ to exceed $b-1$. Now let $\phi_1$ be the identity permutation, and for each $1 < j \leq k$ we define $\phi_j$ so that 
	$$ \phi_j(x) > \phi_j(y) \quad \mbox{iff} \quad \begin{cases} x_j<y_j & \mbox{or} \\ x_j=y_j \;\; \mbox{and} \;\; x>y. \end{cases} $$
See Figure \ref{fig:orderings} for an illustration. If $d_j = x_j-y_j$ then for $i \geq 2$ and  $\phi_i(x)>\phi_i(y)$ (i.e. $d_i \leq 0$) we have 
	\begin{equation}
	\phi_i(x) - \phi_i(y) \geq -d_ib^{k-1} + \sum_{j=1}^{i-1} d_jb^{k-j-1} + \sum_{j=i+1}^k d_jb^{k-j}. 
	\label{eq:order}
	\end{equation}
This is an equality when $n+1=b^k$ (i.e. when $n+1$ is a perfect $k^{\mbox{\scriptsize th}}$ power), and increasing $n$ can only increase the number of labels $z$ such that $\phi_i(z)$ is between $\phi_i(x)$ and $\phi_i(y)$. \\[2 mm]
Now we fix distinct $x,y \in X$ and wlog $x_1 \geq y_1$. There are four cases: 
\begin{enumerate}
\item $|x_i - y_i| > 1$ for some $i$
\item $x_1=y_1$
\item $x_1>y_1$ and $x_i \geq y_i$ for all $i$. 
\item $x_1>y_1$ and $x_j<y_j$ for some $j \geq 2$.
\end{enumerate}
We treat each of these cases separately.
	\begin{description}
	\item[Case 1:] All $z$ such that $z_i= \min(x_i,y_i)+1$ must have $\phi_i(z)$ between $\phi_i(x)$ and $\phi_i(y)$. There are at least $b^{k-1}$ such $z$. 
	\end{description}
\begin{description}
\item[Case 2:] Wlog $d_i=-1$ and $d_j=0$ for $j<i$ for some $i \geq 2$. Subbing this into equation (\ref{eq:order}) yields
	\begin{align*} \phi_i(x)-\phi_i(y) 
	& \geq b^{k-1} - b^{k-i-1} - b^{k-i-2} - \cdots - 1 \\
	& > b^{k-1} - 2 b^{k-3}. 
	\end{align*}
\item[Case 3:] If $d_1=1$ and $d_j \geq 0$ for all $j$, then $|\phi_1(x) - \phi_1(y)| = x-y \geq b^{k-1}$.
\item[Case 4:] We have $d_1=1$ and there is some $i \geq 2$, $d_i=-1$ and $d_j \geq 0$ for all $j<i$. Substituting this into equation (\ref{eq:order}) yields  
	\begin{align*} \phi_i(x)-\phi_i(y) 	
	& \geq b^{k-1} + b^{k-2} - b^{k-3} - b^{k-4} - \cdots - 1 \\
	& > b^{k-1}. 
	\end{align*}
\end{description}
\end{proof}

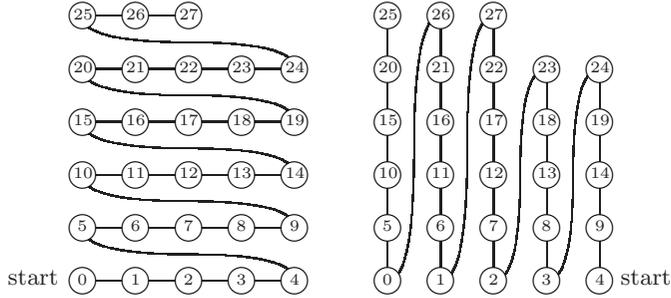
\begin{figure} 
\begin{center}
\setlength{\unitlength}{0.4pt}
\newcommand{\ccc}{\color{white} \put(0,0){\circle*{25}} \color{black} \put(0,0){\circle{25}}}
\begin{picture}(200,250)
\multiput(0,0)(0,50){5}{
	\qbezier(0,0)(100,0)(200,0)
	\qbezier(200,0)(200,25)(100,25)
	\qbezier(100,25)(0,25)(0,50)
	\multiput(0,0)(50,0){5}{\ccc}
}
\qbezier(0,250)(50,250)(100,250)
\multiput(0,250)(50,0){3}{\ccc}
\put(-4.5,-3.2){\mbox{\tiny $0$}}
\put(45.5,-3.2){\mbox{\tiny $1$}}
\put(95.5,-3.2){\mbox{\tiny $2$}}
\put(145.5,-3.2){\mbox{\tiny $3$}}
\put(195.5,-3.2){\mbox{\tiny $4$}}
\put(-4.5,46.8){\mbox{\tiny $5$}}
\put(45.5,46.8){\mbox{\tiny $6$}}
\put(95.5,46.8){\mbox{\tiny $7$}}
\put(145.5,46.8){\mbox{\tiny $8$}}
\put(195.5,46.8){\mbox{\tiny $9$}}
\put(-8.5,96.8){\mbox{\tiny $10$}}
\put(41.5,96.8){\mbox{\tiny $11$}}
\put(91.5,96.8){\mbox{\tiny $12$}}
\put(141.5,96.8){\mbox{\tiny $13$}}
\put(191.5,96.8){\mbox{\tiny $14$}}
\put(-8.5,146.8){\mbox{\tiny $15$}}
\put(41.5,146.8){\mbox{\tiny $16$}}
\put(91.5,146.8){\mbox{\tiny $17$}}
\put(141.5,146.8){\mbox{\tiny $18$}}
\put(191.5,146.8){\mbox{\tiny $19$}}
\put(-8.5,196.8){\mbox{\tiny $20$}}
\put(41.5,196.8){\mbox{\tiny $21$}}
\put(91.5,196.8){\mbox{\tiny $22$}}
\put(141.5,196.8){\mbox{\tiny $23$}}
\put(191.5,196.8){\mbox{\tiny $24$}}
\put(-8.5,246.8){\mbox{\tiny $25$}}
\put(41.5,246.8){\mbox{\tiny $26$}}
\put(91.5,246.8){\mbox{\tiny $27$}}
\put(-70,-3.2){\mbox{\small start}}
\end{picture}
\hspace{10 mm}
\begin{picture}(200,250)
\multiput(0,0)(50,0){2}{
	\qbezier(0,0)(0,100)(0,250)
	\qbezier(0,0)(25,0)(25,125)
	\qbezier(25,125)(25,250)(50,250)
	\multiput(0,0)(0,50){6}{\ccc}
}
\multiput(150,0)(-50,0){2}{
	\qbezier(0,0)(25,0)(25,100)
	\qbezier(25,100)(25,200)(50,200)
	\qbezier(50,0)(50,100)(50,200)
	\multiput(50,0)(0,50){5}{\ccc}
}
\qbezier(100,0)(100,125)(100,250)
\multiput(100,0)(0,50){6}{\ccc}
\put(-4.5,-3.2){\mbox{\tiny $0$}}
\put(45.5,-3.2){\mbox{\tiny $1$}}
\put(95.5,-3.2){\mbox{\tiny $2$}}
\put(145.5,-3.2){\mbox{\tiny $3$}}
\put(195.5,-3.2){\mbox{\tiny $4$}}
\put(-4.5,46.8){\mbox{\tiny $5$}}
\put(45.5,46.8){\mbox{\tiny $6$}}
\put(95.5,46.8){\mbox{\tiny $7$}}
\put(145.5,46.8){\mbox{\tiny $8$}}
\put(195.5,46.8){\mbox{\tiny $9$}}
\put(-8.5,96.8){\mbox{\tiny $10$}}
\put(41.5,96.8){\mbox{\tiny $11$}}
\put(91.5,96.8){\mbox{\tiny $12$}}
\put(141.5,96.8){\mbox{\tiny $13$}}
\put(191.5,96.8){\mbox{\tiny $14$}}
\put(-8.5,146.8){\mbox{\tiny $15$}}
\put(41.5,146.8){\mbox{\tiny $16$}}
\put(91.5,146.8){\mbox{\tiny $17$}}
\put(141.5,146.8){\mbox{\tiny $18$}}
\put(191.5,146.8){\mbox{\tiny $19$}}
\put(-8.5,196.8){\mbox{\tiny $20$}}
\put(41.5,196.8){\mbox{\tiny $21$}}
\put(91.5,196.8){\mbox{\tiny $22$}}
\put(141.5,196.8){\mbox{\tiny $23$}}
\put(191.5,196.8){\mbox{\tiny $24$}}
\put(-8.5,246.8){\mbox{\tiny $25$}}
\put(41.5,246.8){\mbox{\tiny $26$}}
\put(91.5,246.8){\mbox{\tiny $27$}}
\put(220,-3.2){\mbox{\small start}}
\end{picture}
\end{center}
\caption{Permutations $\phi_1$ (left) and $\phi_2$ (right), with $n=27$, $k=2$ and $b=5$.} 
\label{fig:orderings}
\end{figure}

\begin{lem}
\label{lem:deleting_edges}
Let $n,k,t$ be positive integers, let $X = \{ 0,1, \ldots ,n \}$ and let $\A \subseteq \B(X)$ be a set of $k$ trees. For each $A \in \A$ let $A = A_1 \sqcup A_2 \sqcup A_3 \sqcup \cdots$ be a partition of the vertices of $A$ into at most $t$ connected parts. If for all but $z$ pairs of distinct labels $x,y \in X$ there is at least one $A \in \A$ such that the leaves labelled with $x$ and $y$ are in different parts of $A$, then
	$$ M(\A) \geq n-kt+k-z+1. $$
\end{lem}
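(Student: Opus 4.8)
The plan is to recast the statement as the upper bound $\sum_{j=1}^m (|L_j|-1) \le k(t-1)+z$, valid for \emph{every} unrooted agreement forest $L_1,\dots,L_m$ of $\A$; since $\sum_j |L_j| = n+1$, this is equivalent to $m \ge n+1-k(t-1)-z = n-kt+k-z+1$, and $M(\A)$ is the minimum such $m$. Fix an agreement forest. I want to pay for each unit of $\sum_j(|L_j|-1)$ using one of two resources: the cut edges separating the parts $A_1,A_2,\dots$ of each tree (at most $t-1$ per tree, hence at most $k(t-1)$ in total), and the at most $z$ unseparated pairs.

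First I would localise to a single part $L_j$. For a tree $A\in\A$, two labels $x,y\in L_j$ lie in different parts of the partition $A = A_1\sqcup A_2\sqcup\cdots$ exactly when the path between them in $A$ — which is contained in $A|L_j$ — meets one of the cut edges lying inside $A|L_j$. Writing $c_{A,j}$ for the number of cut edges of $A$ contained in $A|L_j$, the fact that the subtrees $A|L_1,\dots,A|L_m$ are vertex-disjoint guarantees that each cut edge of $A$ lies in at most one of them, so $\sum_j c_{A,j}\le t-1$. Deleting these $c_{A,j}$ edges splits $A|L_j$ into $c_{A,j}+1$ components and hence partitions $L_j$ into $c_{A,j}+1$ leaf-blocks, a partition I call $P_A$.

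The crux — and the step I expect to be the main obstacle — is controlling the common refinement of the partitions $\{P_A\}_{A\in\A}$ of $L_j$. Call $x,y\in L_j$ \emph{tied} if no tree splits them; this is an equivalence relation whose classes are exactly the blocks of the common refinement, and every tied pair is one of the $z$ unseparated pairs. A naive bound on the number $R_j$ of refinement classes in terms of $\sum_A|P_A|$ \emph{fails}, since refinements can grow multiplicatively; what rescues the argument is the agreement-forest condition $A/L_j \equiv B/L_j$ for all $A,B\in\A$. This forces all the $P_A$ to arise by cutting edges of one and the same topological tree $\tau_j := A/L_j$: each $P_A$ is the component partition of $\tau_j$ after removing an edge set of size at most $c_{A,j}$, and the common refinement is the component partition of $\tau_j$ after removing the union of these edge sets. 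Because deleting a set of edges from a tree creates exactly one more component than edges deleted,
$$ R_j - 1 \;\le\; \sum_{A\in\A} c_{A,j}. $$
The delicate point to verify here is that cutting an edge of $A|L_j$ partitions $L_j$ in the same way as cutting the corresponding (possibly subdivided) edge of $\tau_j$, so that the refinement genuinely lives inside the single tree $\tau_j$.

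Finally I would assemble the count. If the tied-classes inside $L_j$ have sizes $s_{j,1},\dots,s_{j,R_j}$, then
$$ |L_j|-1 \;=\; \sum_a (s_{j,a}-1) + (R_j-1) \;\le\; \sum_a \binom{s_{j,a}}{2} + \sum_{A\in\A} c_{A,j}, $$
using $s-1\le\binom{s}{2}$ for the first sum and the refinement bound for the second. Each $\binom{s_{j,a}}{2}$ counts tied — hence unseparated — pairs inside $L_j$, and since the parts $L_j$ are disjoint each unseparated pair is counted at most once, giving $\sum_j\sum_a\binom{s_{j,a}}{2}\le z$; meanwhile $\sum_A\sum_j c_{A,j}\le k(t-1)$ by the per-tree bound. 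Summing the displayed inequality over $j$ yields $\sum_j(|L_j|-1)\le z+k(t-1)$, which is exactly the required estimate.
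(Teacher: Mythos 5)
Your proposal is correct and is essentially the paper's own argument in per-part form: the paper deletes the at most $k(t-1)$ edges of the quotient forest $F=\bigsqcup_j A/L_j$ corresponding to ``hot'' (cut) edges and notes the result has at least $|X|-z$ components, which is exactly your combination of the refinement bound $R_j-1\le\sum_A c_{A,j}$ with $s-1\le\binom{s}{2}$, summed over $j$. Your ``delicate point'' --- that cuts in every tree transfer to edge deletions in the single tree $\tau_j=A/L_j$ via the isomorphisms $A/L_j\equiv B/L_j$ --- is precisely the step the paper handles by saying each hot edge corresponds to at most one edge of $F$.
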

\begin{proof}
Let $\{ L_1,L_2, \ldots ,L_m \}$ be an agreement forest of $\A$ and let $F = A/L_1 \sqcup A/L_2 \sqcup \cdots \sqcup A/L_m$ be a forest with leaves labelled by $X$ for some $A \in \A$ (by definition \ref{def:uaf} it doesn't matter which $A \in \A$ we choose). Each tree $A \in \A$ can be divided into the forest $ A = A_1 \sqcup A_2 \sqcup A_3 \sqcup \cdots$ by deleting at most $t-1$ edges. Let these edges be called \emph{hot} edges. There is a total of at most $k(t-1)$ hot edges (at most $t-1$ in each $A \in \A$). An edge in $A$ might not correspond to any edge in $F$, moreover there many be several edges that correspond to the same edge in $F$. In any case, each hot edge corresponds to at most one edge in $F$. Therefore if we delete all the edges in $F$ corresponding to hot edges, then we delete at most $k(t-1)$ edges.\\[2 mm]
The result of the deletion of these edges would be a forest of at least $|X|-z$ components because $z$ is an upper bound on the number of pairs of leaves in the same component. Therefore $F$ must have had $m \geq n+1-z-k(t-1)$ components.
\end{proof}

\begin{lem}
\label{lem:lb_R_uaf}
Let $k \geq 2$, let $n \geq (3k)^k$ and let $X = \{ 0,1, \ldots ,n \}$. If $\T = \{ T_1,T_2, \ldots ,T_k \}$ is a collection of binary trees each with $n+1$ leaves, then there exists $\A = \{ A_1,A_2, \ldots ,A_k \} \subseteq \B(X)$ such that $A_i$ is a labelling of the leaves of $T_i$, and 
	$$ M(\A) \geq n - 3k \sqrt[k]{n} + 1. $$
\end{lem}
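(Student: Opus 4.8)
The plan is to feed a carefully chosen labelling into Lemma~\ref{lem:deleting_edges}, using the permutation lemma (Lemma~\ref{lem:orderings}) to drive the tree-division lemma (Lemma~\ref{lem:divide_tree_equally}). Set $b = \lfloor \sqrt[k]{n+1} \rfloor$ and $a = b^{k-1} - 2b^{k-3}$. First I would record that the hypothesis $n \ge (3k)^k$ makes $b$ large enough that $2 \le b \le \sqrt[k]{n+1}$ and $a > 1$, so both lemmas apply. Then I apply Lemma~\ref{lem:orderings} with this $b$ to obtain permutations $\phi_1, \ldots, \phi_k$ of $X$ such that every pair of distinct labels $x,y$ satisfies $|\phi_i(x) - \phi_i(y)| > b^{k-1} - 2b^{k-3} = a$ for at least one coordinate $i$.

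Next I construct each $A_i$. Using Lemma~\ref{lem:divide_tree_equally} with parameter $a$, split $T_i$ into connected subtrees $A_{i,1}, \ldots, A_{i,t_i}$, each carrying fewer than $a$ leaves. Now order $X$ by $\phi_i$-value and label the leaves of $T_i$ so that $A_{i,1}$ receives the first block of labels in this order, $A_{i,2}$ the next block, and so on, with block $j$ containing exactly $|L(A_{i,j})|$ labels. Then the label set of each subtree of $A_i$ is a contiguous interval of $\phi_i$-values of length less than $a$. The key consequence is that if $x$ and $y$ lie in the same subtree of $A_i$ then $|\phi_i(x) - \phi_i(y)| < a$; contrapositively, $|\phi_i(x) - \phi_i(y)| \ge a$ forces $x$ and $y$ into different parts of $A_i$.

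Combining the two facts is the heart of the argument: for any distinct $x,y$, Lemma~\ref{lem:orderings} supplies an $i$ with $|\phi_i(x) - \phi_i(y)| > a$, and the block structure then separates $x$ and $y$ in $A_i$. Thus \emph{every} pair of distinct labels is separated by some tree, so I may take $z = 0$ when invoking Lemma~\ref{lem:deleting_edges}. That lemma gives $M(\A) \ge n - kt + k + 1$, where $t$ is an upper bound on the number of parts used for each tree.

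It then remains to pin down the arithmetic, which I expect to be the main (though routine) obstacle. Lemma~\ref{lem:divide_tree_equally} produces at most $2(n+1)/a + 1$ parts, since all but one have at least $a/2$ leaves; with $a = b^{k-1} - 2b^{k-3}$ and $b = \lfloor \sqrt[k]{n+1}\rfloor$ one checks that $2(n+1)/a + 1 \le 3\sqrt[k]{n}$ exactly when $n \ge (3k)^k$, controlling both the floor in $b$ and the factor $(1 - 2/b^2)^{-1}$. Taking $t = \lceil 3\sqrt[k]{n}\rceil$ then yields $M(\A) \ge n - k\lceil 3\sqrt[k]{n}\rceil + k + 1 \ge n - 3k\sqrt[k]{n} + 1$, where the additive $+k$ conveniently absorbs the rounding. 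The conceptual content is the clean observation that the separation guaranteed in some coordinate exceeds every block length, so no pair survives and $z = 0$; the delicate part is merely forcing the leading constant down to $3k$, which is precisely what the threshold $n \ge (3k)^k$ buys.
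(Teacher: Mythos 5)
Your proposal is correct and follows essentially the same route as the paper: the same choice of $b=\lfloor\sqrt[k]{n+1}\rfloor$, the same permutations from Lemma~\ref{lem:orderings}, the same block-by-block labelling of the parts produced by Lemma~\ref{lem:divide_tree_equally}, and the same application of Lemma~\ref{lem:deleting_edges} with $z=0$. The only cosmetic difference is that the paper fixes $t$ first and sets $a=2n/t$ while you fix $a=b^{k-1}-2b^{k-3}$ and bound the part count afterward, and your slack of $+k$ does absorb the one extra part per tree, so the arithmetic closes the same way.
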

\begin{proof}
First set 
	$$ b= \left\lfloor \sqrt[k]{n+1} \right\rfloor \; , \;\; t = \left\lceil \frac{2n}{b^{k-1}-2b^{k-3}} \right\rceil \;\;  \mbox{and} \;\; a = \frac{2n}{t} \leq b^{k-1} - 2b^{k-3}. $$ 
Then divide each $T_i$ into at most $t$ connected parts, $T_i = T_{i1} \sqcup T_{i2} \sqcup T_{i3} \sqcup \cdots$ (using Lemma \ref{lem:divide_tree_equally}) so each part contains less than $a$ leaves. Now it suffices to label the leaves of each $T_i$ such that for any pair of distinct labels $x,y \in X$, there is some $i$ such that $x$ and $y$ are in different parts of $T_i$, and then apply Lemma~\ref{lem:deleting_edges} (with $z=0$). 
\\[3mm]
\begin{figure}

\begin{center}
\setlength{\unitlength}{1pt} 
\begin{picture}(256,125)
	\qbezier(70,125)(70,115)(70,105)
	\qbezier(0,0)(35,52.5)(70,105)
	\qbezier(20,0)(15,7.5)(10,15)
	\qbezier(40,0)(65,37.5)(90,75)
	\qbezier(60,0)(65,7.5)(70,15)
	\qbezier(80,0)(70,15)(60,30)
	\qbezier(120,0)(100,30)(80,60)
	\qbezier(140,0)(105,52.5)(70,105)
	\put(-2,-14){
		\put(0,0){\mbox{$1$}}
		\put(20,0){\mbox{$2$}}
		\put(40,0){\mbox{$3$}}
		\put(60,0){\mbox{$4$}}
		\put(80,0){\mbox{$5$}}
		\put(100,0){\mbox{$6$}}
		\put(120,0){\mbox{$7$}}
		\put(140,0){\mbox{$8$}}
	}
	\put(59,130){\mbox{$0$}}
	\put(70,38){\mbox{$e_1$}}
	\put(89,27){\mbox{$e_2$}}
	\put(0,60){\mbox{$A =$}}
	\put(160,60){\mbox{$A/\{ 4,5,6 \}=$}}	
	\qbezier(240,60)(240,70)(240,80)
	\qbezier(240,60)(232,55)(224,50)
	\put(222,36){\mbox{$5$}}
	\put(254,36){\mbox{$6$}}
	\put(230,85){\mbox{$4$}}
	\color[rgb]{0.0,0.5,0.0} \linethickness{0.5mm} 
	\qbezier(100,0)(105,7.5)(110,15)
	\qbezier(110,15)(95,37.5)(80,60)
	\qbezier(80,60)(70,45)(60,30)
	\qbezier(60,30)(65,22.5)(70,15)
	\qbezier(240,60)(248,55)(256,50)
	\put(0,0){\vertex}
	\put(20,0){\vertex}
	\put(40,0){\vertex}
	\put(60,0){\vertex}
	\put(80,0){\vertex}
	\put(100,0){\vertex}
	\put(120,0){\vertex}
	\put(140,0){\vertex}
	\put(70,125){\vertex}
	\put(240,80){\vertex}
	\put(224,50){\vertex}
	\put(256,50){\vertex}
\end{picture}
\end{center}
\caption{Edges $e_1,e_2$ in $A$ correspond to the same edge in $A/\{ 4,5,6 \}$.}
\label{fig:hotEdges}
\end{figure}
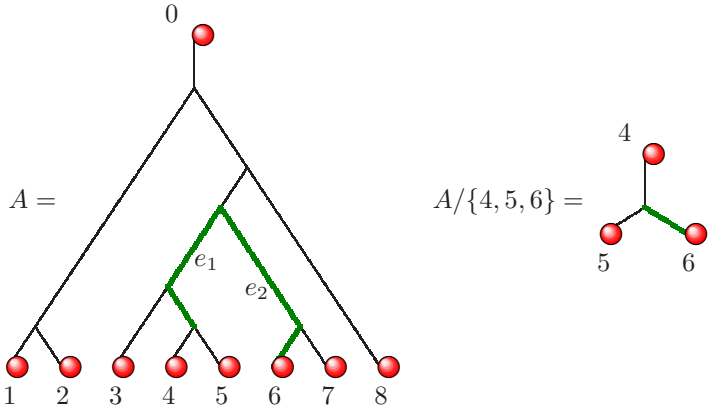
To achieve this: First (by Lemma~\ref{lem:orderings}) find $k$ permutations of $X$ such that for any two distinct labels $x,y \in X$ there is some $1 \leq i \leq k$ such that $|\phi_i(x)-\phi_i(y)| \geq a$. Then assign labels to the leaves of $T_i$ in the order given by $\phi_i$, firstly to all the leaves of $T_{i1}$, then $T_{i2}$, then $T_{i3}$, and so on. For any distinct $x$ and $y$, consider a permutation $\phi_i$ in which $|\phi_i(x)-\phi_i(y)| \geq a$. If some $T_{ij}$ contained both $x$ and $y$, then it would have to also contain the labels 
	$$\{ z : \phi_i(x) > \phi_i(z) > \phi_i(y) \; \mbox{ or } \;  \phi_i(x) < \phi_i(z) < \phi_i(y) \}, $$
but this is impossible since each $T_{ij}$ has size less than $a$. Thus by Lemma~\ref{lem:deleting_edges}, 
	$$ M(\A) \geq n - k(t-1) + 1. $$
For constant $k$, one can easily observe that $t \sim 2\sqrt[k]{n}$ as $n \rightarrow \infty$. For an explicit bound (using $n \geq (3k)^k$), we can show $t \leq 3\sqrt[k]{n} + 1$, in the following manner:
\begin{align*}
b & = \left\lfloor \sqrt[k]{n+1} \right\rfloor > \sqrt[k]{n+1} \left( 1 - \frac{1}{\sqrt[k]{n+1}} \right) \\
& > \sqrt[k]{n} \left( 1 - \frac{1}{3k} \right) \\
\implies b^2-2 & > \sqrt[k]{n^2} \left( \left( 1 - \frac{1}{3k} \right)^2 - \frac{2}{9k^2} \right) \\
& > \sqrt[k]{n^2} \left( 1 - \frac{1}{k} \right) \\
\therefore t & = \left\lceil \frac{2n}{b^{k-3}(b^2-2)} \right\rceil \\
& \leq \left\lceil \frac{2n}{n^{(k-1)/k} \left( 1 - \frac{k}{3k} \right)} \right\rceil \\
& \leq 3n^{1/k}+1.
\end{align*}
\end{proof}

If $\A^\prime = \{ A_1^\prime , \ldots ,A_k^\prime \}$ is obtained from $\A$ by a permutation, $\pi$, of the labels (i.e. The leaf labeled $x$ in $A_i$ is labelled $\pi(x)$ in $A_i^\prime$ for all $x \in X$ and all $i \in [k]$) then $M(\A^\prime) = M(\A)$. Therefore in the above Lemma, we could have constructed $\A$ so that one of the trees has a particular labelling, say $A_1$. Now if we set $k=2$ and apply the unrooted agreement forest lemma (\ref{lem:uafl}) then 
	$$ R_{TBR}(n) \geq n - 6 \sqrt{n}, $$
for all integers $n \geq 36$.\footnote{For $n<36$ this bound is trivial since $R_{TBR}(n) \geq 0 > n - 6\sqrt{n}$.} This is a suitable lower bound for Theorem~\ref{thm:extreme}.

\section{Lower Bound for the Expectation}

\begin{lem}
\label{lem:lb_expect_uaf}
Let $k \geq 2$, let $n$ be a positive integer, let $X = \{ 0,1, \ldots ,n \}$, and let $T_1,T_2, \ldots ,T_k$ be binary trees each with $n+1$ leaves. If $\A = \{ A_1,A_2, \ldots ,A_k \} \subseteq \B(X)$ is chosen uniformly at random such that each $A_i$ is a labelling of the leaves of $T_i$, then 
	$$ \E[M(\A)] > n - (2k+1)n^{2/(k+1)} + 1. $$
\end{lem}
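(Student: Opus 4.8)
The plan is to establish the lower bound $\E[M(\A)] > n - (2k+1)n^{2/(k+1)} + 1$ by combining the structural bound of Lemma~\ref{lem:deleting_edges} with a first-moment estimate on the number of ``bad'' pairs. Since $\A$ consists of random labellings of the fixed trees $T_1, \ldots, T_k$, I would first fix a partition parameter. Following the proof of Lemma~\ref{lem:lb_R_uaf}, I would choose an integer $b$ of order $n^{(k-1)/(k+1)}$ and use Lemma~\ref{lem:divide_tree_equally} to divide each $T_i$ into at most $t = O(n^{2/(k+1)})$ connected parts, each containing fewer than $a = 2n/t$ leaves. The point of balancing $b$ and $t$ this way (rather than the $b \approx n^{1/k}$, $t \approx n^{1/k}$ split used for the worst-case radius) is to trade off the deterministic cost $k(t-1)$ against the expected number of bad pairs $z$, which now enters only through its expectation.

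The key quantity to control is $z$, the number of pairs of distinct labels $\{x,y\}$ that land in the \emph{same} part in \emph{every} tree $A_i \in \A$. For a single tree $A_i$, a uniformly random labelling places a fixed pair $\{x,y\}$ into a common part of size (at most) $a$ with probability roughly bounded by $a/n$ times the number of label slots, and since the labellings across the $k$ trees are independent, the probability that $\{x,y\}$ is bad in all $k$ trees factorises. I would compute $\P(\{x,y\} \text{ bad})$ explicitly (analogous to the ``good pair'' calculation in Lemma~\ref{lem:ub_expect_raf}, using that each part has fewer than $a$ leaves), obtaining a bound of order $(a/n)^{k}$ up to constants. Multiplying by the $\binom{n+1}{2}$ choices of pair gives $\E[z] = O(n^{2} \cdot (a/n)^{k})$, and with the chosen $a = \Theta(n^{2/(k+1)})$ this expectation is itself $O(n^{2/(k+1)})$, matching the order of $k(t-1)$.

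To pass from the structural inequality to the expectation, I would apply Lemma~\ref{lem:deleting_edges} to a \emph{random} $\A$ and take expectations: since the lemma gives $M(\A) \geq n - k(t-1) + 1 - z$ pointwise (after noting $-kt + k = -k(t-1)$), linearity of expectation yields
\begin{equation*}
\E[M(\A)] \geq n - k(t-1) + 1 - \E[z].
\end{equation*}
The remaining work is purely arithmetic: substitute the chosen values of $b$, $t$, $a$, bound both $k(t-1)$ and $\E[z]$ by constant multiples of $n^{2/(k+1)}$, and verify that the two contributions together are strictly less than $(2k+1)n^{2/(k+1)}$, which produces the claimed strict inequality.

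The main obstacle I anticipate is the careful bookkeeping in the probability estimate for $\E[z]$: one must correctly account for the floor functions in $b$ and $t$, ensure the per-tree probability bound $a/n$ (or $(a-1)/n$) is valid uniformly for pairs that may or may not overlap the root label $0$, and confirm that the constant emerging from $\E[z]$ plus the constant $k$ from the deterministic term sum to at most $2k+1$. The balancing of the exponents — choosing $b$ and hence $t$ so that the deterministic partition cost and the expected bad-pair count are of the same order $n^{2/(k+1)}$ — is the crux; once the exponents are matched, the inequality follows by tracking constants.
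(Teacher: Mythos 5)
Your proposal is correct and follows essentially the same route as the paper: partition each tree into $t = \Theta(n^{2/(k+1)})$ parts of size less than $a = \Theta(n^{(k-1)/(k+1)})$ via Lemma~\ref{lem:divide_tree_equally}, bound the probability that a fixed pair is bad in all $k$ independent labellings by $((a-1)/n)^k$, use linearity of expectation to get $\E[z] = O(n^{2/(k+1)})$, and conclude by taking expectations in the pointwise inequality from Lemma~\ref{lem:deleting_edges}. The paper sets $a$ and $t$ directly rather than via an auxiliary parameter $b$, and needs no appeal to Lemma~\ref{lem:orderings}, but these are cosmetic differences; the balancing of exponents and the first-moment argument are exactly as you describe.
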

\begin{proof}
First set 
	$$ a = (n+1)^{\frac{k-1}{k+1}} \;\; \mbox{and} \;\; t = \left\lceil 2(n+1)^{2/(k+1)} \right\rceil. $$ 
Then, using Lemma~\ref{lem:divide_tree_equally}, divide each $A_i$ into $t$ (possibly empty) connected parts, so that each part has less than $a$ leaves: $A_i = A_{i1} \sqcup \cdots \sqcup A_{it}$. For distinct $x,y \in X$, notice that
	$$ \P( y \in L(A_{ij}) | x \in L(A_{ij})) = \frac{|L(A_{ij})|-1}{n} < \frac{a-1}{n}. $$
Now let $Z$ be the set of pairs of distinct labels $\{ x,y \}$ such that for each $i$, $x$ and $y$ are in the same part $A_{ij}$. Explicitly 
	$$ Z := \big\{ \{ x,y \} \; : \; x \not= y \mbox{ and } \forall i \; \exists j \; \mbox{ s.t. } \; x,y \in L(A_{ij}) \big\}.$$ 
Since the labellings are chosen independently, $\P( \{ x,y \} \in Z ) < \left( \frac{a-1}{n} \right)^k$ for all $x,y \in X$. Now let $z = |Z|$. By the linearity of expectation 
	$$ \E[z] = \binom{n+1}{2} \P( \{ x,y \} \in Z) < n^2 \left( \frac{a-1}{n} \right)^k < n^{2/(k+1)}. $$
Since $t-1 \leq 2n^{2/(k+1)}$ and $\E[z] \leq n^{2/(k+1)}$, we can  applying Lemma~\ref{lem:deleting_edges} to get 
	\begin{align*} \E[M(\A)] 
	& \geq n - k(t-1) - \E[z] + 1\\ 
	& > n - (2k+1)n^{2/(k+1)} + 1.\tag*{\qedhere}
	\end{align*}
\end{proof}

Setting $k=2$ in Lemma \ref{lem:lb_expect_uaf} and choosing $T_1$ and $T_2$ uniformly and independantly, (so that $\A = \{ A_1,A_2 \}$ is distributed uniformly) gives $\E[M(A_1,A_2)] \geq n-5n^{2/3}+1$. Applying the unrooted agreement forest lemma (\ref{lem:uafl}) gives
	$$ \E[d_{TBR}(A,B)] \geq n - 5n^{2/3}. $$
which is a suitable lower bound for Theorem~\ref{thm:expectation}.

\vspace{3 mm}

\noindent
\textbf{Acknowledgement:} We would like to thank Charles Semple for helpful comments.

\end{document}